\documentclass[10pt, oneside]{article}

\usepackage[dvips]{graphicx}
\usepackage{pstricks}

\oddsidemargin = 10pt\textwidth = 430pt \textheight = 620pt

\usepackage{amssymb}
\usepackage{amsmath}
\usepackage{amsthm}
\usepackage{color}

\newtheorem{theorem}{Theorem}[section]

\newtheorem{lemma}[theorem]{Lemma}

\theoremstyle{definition}
\newtheorem{definition}{Definition}[section]

\theoremstyle{remark}
\newtheorem{remark}{Remark}[section]



\newcommand{\RR}{\mathbb{R}}
\newcommand{\NN}{\mathbb{N}}

\begin{document}
\title{Radially bounded solutions of a $k$-Hessian equation involving a weighted nonlinear source \thanks{Partially supported by Grant FONDECYT 1150230}}

\author{Justino S\'anchez\,\,$^{a}$\; and\;
Vicente Vergara \,$^{a,b}$}
\date{}
\maketitle

\begin{center}
$^a$\,\footnotesize{Departamento de Matem\'{a}ticas, Universidad de La Serena\\
 Avenida Cisternas 1200, La Serena 1700000, Chile.}
\\email: jsanchez@userena.cl
\end{center}
\begin{center}
$^b$\,\footnotesize{Universidad de Tarapac\'{a}, Avenida General Vel\'{a}squez 1775,\\
Arica, Chile.}
\\email: vvergaraa@uta.cl
\end{center}

\begin{abstract}
We consider the problem
\begin{equation}\label{Eq:Abstract}
\begin{cases}
S_k(D^2u)= \lambda |x|^{\sigma} (1-u)^q &\mbox{in }\;\; B,\\
u <0 & \mbox{in }\;\; B,\\
u=0 &\mbox{on }\partial B,
\end{cases}
\end{equation}
where $B$ denotes the unit ball in $\mathbb{R}^n$, $n>2k$ ($k\in \mathbb{N}$), $\lambda>0$, $q > k$ and $\sigma\geq 0$. We study the existence, uniqueness and multiplicity of negative bounded radially symmetric solutions of \eqref{Eq:Abstract}. The methodology to obtain our results is based on a dynamical system approach. For this, we introduce a new transformation which reduces problem \eqref{Eq:Abstract} to an autonomous two dimensional generalized Lotka-Volterra system.

\end{abstract}
2010 Mathematics Subject Classification. Primary: ; Secondary: .

\noindent {\em Keywords and phrases.}\, $k$-Hessian operator; Radial solutions; Critical exponents; Phase analysis; Lotka-Volterra system.
{\footnotesize}

\section{Introduction}
Let $k \in \NN$ and let $\Omega$ be a suitable bounded domain in $\RR^n$. We consider the nonlinear problem
\begin{equation}\label{Eq:f}
\begin{cases}
S_k(D^2u)= f(x,u)&\mbox{in }\;\; \Omega,\\
u <0 & \mbox{in }\;\; \Omega,\\
u=0 &\mbox{on }\;\; \partial\Omega,
\end{cases}
\end{equation}
where $S_k(D^2u)$ stands for the $k$-Hessian operator of $u$ and $f$ is a given nonlinear source. Problem \eqref{Eq:f} has been studied extensively by many authors in different settings. See e.g. \cite{CaNS85, ChWa01, Trudinger95, Trudinger97, Tso89, Urbas90, Wang94}.\\
The $k$-Hessian operator $S_k$ is defined as follows. Let $u\in C^2(\Omega)$, $1\leq k\leq n$, and let $\Lambda=(\lambda_1,\lambda_2,...,\lambda_n)$ be the eigenvalues of the Hessian matrix $(D^2u)$. Then the $k$-Hessian operator is given by
\[
S_k(D^2u)=P_k(\Lambda)=\sum_{1\leq i_1<...<i_k\leq n}\lambda_{i_1}...\lambda_{i_k},
\]
where $P_k(\Lambda)$ is the $k$-th elementary symmetric polynomial in the eigenvalues $\Lambda$, see e.g. \cite{Wang94, Wang09}. Note that $\{S_k:k=1,...,n\}$ is a family of operators which contains the Laplace operator ($k=1$) and the Monge-Amp\`{e}re operator ($k=n$). The monograph \cite{CaMi98} is devoted to applications of Monge-Amp\`{e}re equations to geometry and optimization theory. This family of operators has been studied extensively, see e.g. \cite{Jacobsen99, Tso90} and the references therein. Recently, this class of operators has attracted renewed interest, see e.g. \cite{Bran13, Gavitone09, Gavitone10, NaTa15, DeGa14, WaBa13, WaXu14, SaVe15}.\\
We point out that the $k$-Hessian operators are fully nonlinear for $k\neq 1$. Further, they are not elliptic in general, unless they are restricted to the class
\begin{equation}\label{phi-k-0}
\Phi_0^k(\Omega)=\{u\in C^2(\Omega)\cap C(\overline{\Omega}): S_{i}(D^2 u)\geq 0\;\;\mbox{in}\;\;\Omega,\, i=1,...,k,\,u=0\;\; \text{on}\;\; \partial\Omega \}.
\end{equation}
Observe that $\Phi_0^k(\Omega)$ belongs to the class of subharmonic functions. Further, the functions in $\Phi_0^k(\Omega)$ are negative in $\Omega$ by the maximum principle, see \cite{Wang94}. The $k$-Hessian operator defined on $\Phi_0^k(\Omega)$ imposes certain geometry restrictions on $\Omega$. More precisely, domains called admissible are those whose boundary $\partial\Omega$ satisfies the inequality
\begin{equation}\label{curv:1}
P_{k-1}(\kappa_1, ..., \kappa_{n-1})\geq 0,
\end{equation}
where $\kappa_1, ..., \kappa_{n-1}$ denote the principal curvatures of $\partial\Omega$ relative to the interior normal. A typical example of a domain $\Omega$ for which \eqref{curv:1} holds is a ball. For more details we refer the interested reader to \cite{Wang09}.

\begin{remark}
Problem \eqref{Eq:f} can be easily reformulated in order to study positive solutions under the change of variable $v=-u$, which in turn yields $S_k(D^2u)=(-1)^kS_k(D^2v)$ by the $k$-homogeneity of the $k$-Hessian operator.
\end{remark}
\medbreak

Now observe that, if $u\in \Phi_0^k(\Omega)$, then the right hand side of (\ref{Eq:f}) must be nonnegative. Typical examples of nonlinear terms $f$ appearing in the literature are $f(u) = |\lambda u|^p$ (see \cite{Tso90}), $f(u)= \lambda e^{-u}$ (see \cite{Chandrasekhar85, Fowler31, Gelfand63, JoLu73} for $k=1$ and \cite{Jacobsen99, Jacobsen04, JaSc02, JaSc04} for $1\leq k \leq n$) and  $f(u)=\lambda(1+u)^p$ (see \cite{BrVa97, GaMa02, JoLu73} for $k=1$.)
The seminal contribution on the analysis of critical values for $k=1$ with a polynomial and exponential source was made by Joseph and Lundgren in \cite{JoLu73}. In general, for problems of Gelfand type for $k\geq 1$, the first result ($k=n$) in the radial case is due to Cl\'ement et al. \cite{ClFM96} and for $1\leq k \leq n$ to Jacobsen \cite{Jacobsen04}. Note that in all the examples above the nonlinear terms are independent of the variable $x$.

\medbreak

Next, for our purposes we give some general notions of solutions to (\ref{Eq:f}). As usual, a classical solution (or solution) of (\ref{Eq:f}) is a function $u\in \Phi_0^k(\Omega)$ satisfying the equation in (\ref{Eq:f}). We recall the version of the method of super and subsolutions for \eqref{Eq:f}, see \cite[Theorem 3.3]{Wang94} for more details.
\begin{definition}
A function $u\in\Phi^k(\Omega):=\{u\in C^2(\Omega)\cap C(\overline{\Omega}): S_{i}(D^2 u)\geq 0\;\;\mbox{in}\;\;\Omega,\, i=1,...,k,\}$ is called a {\it subsolution} (resp. {\it supersolution}) of \eqref{Eq:f} if
\begin{equation*}
\begin{cases}
S_k(D^2u)\geq (\mbox{resp.}\leq)& f(x,u)\;\;\mbox{in }\;\; \Omega,\\
u\leq (\mbox{resp.}\geq)\;\; 0&\qquad\;\,\mbox{on }\; \partial \Omega.
\end{cases}
\end{equation*}
\end{definition}
Note that the trivial function $u\equiv 0$ is always a supersolution.

The following concept is needed to establish a general result on the existence of solutions to problem \eqref{Eq:f}.
\begin{definition}
We say that a function $v$ is a {\it maximal} solution of \eqref{Eq:f} if $v$ is a solution of \eqref{Eq:f} and, for each subsolution $u$ of \eqref{Eq:f}, we have $u\leq v$.
\end{definition}
This notion of maximal solution was recently introduced in \cite{SaVe15} to prove the existence of solutions to problem \eqref{Eq:f}.

In this article we study problem (\ref{Eq:f}) on the unit ball $B$ of $\RR^n$ involving a weight and a polynomial source, i.e, let us consider the problem
\begin{equation}\label{Eq:f:pol}
\begin{cases}
S_k(D^2u)= \lambda |x|^\sigma (1-u)^q &\mbox{in }\;\; B,\\
u <0 & \mbox{in }\;\; B,\\
u=0 &\mbox{on }\partial B,
\end{cases}
\end{equation}
where $\lambda\in\RR$ is a parameter and $q>0$. In the semilinear case ($k=1$) problem \eqref{Eq:f:pol} was studied via bifurcation theory in \cite{korman03} and recently by a variational approach in \cite{ItMU15}. In the fully-nonlinear case, that is $k>1$ was recently study in \cite{SaVe15} subject to conditions $n>2k$ and $q$ greater or equal to Tso's critical exponent and $\sigma=0$ (see \eqref{q:ast:kalfa} below). In \cite{korman03} for $q<\frac{n+\sigma}{n-2}$, was proved that there exists $\lambda_0>0$, so that the problem \eqref{Eq:f:pol} has exactly 2, 1 or 0 positive radial solutions, depending on whether $\lambda<\lambda_0$, $\lambda=\lambda_0$ or $\lambda>\lambda_0$. Notice that in case $\sigma>2$ this result cover in particular supercritical nonlinearities, i.e., the ones with $q>\frac{n+2}{n-2}$.

We recall that the $k$-Hessian operator in radial coordinates can be written as $S_k(D^2u)=c_{n,k}\,r^{1-n}\left(r^{n-k}(u')^k \right)'$, where $r=|x|,\,x\in\RR^n$ and where $c_{n,k}$ is defined by $c_{n,k}=\binom{n}{k}/n$ being $\binom{n}{k}$ the binomial coefficient.

Next, in order to state our main result, we write \eqref{Eq:f:pol} in radial coordinates, i.e.,
\begin{equation*}
(P_{\lambda})\qquad
\begin{cases}
c_{n,k}r^{1-n}\left(r^{n-k}(u')^k \right)' = \lambda\,r^\sigma (1-u)^q\,,\quad 0<r<1,\\
u(r)  < 0 \,, \hspace{4.65cm} 0<r<1,\\
u'(0)  =0,\, u(1)=0. &
\end{cases}
\end{equation*}
Now we introduce the space of functions $\Phi_0^k$ defined on $\Omega=(0,1)$ as in \eqref{phi-k-0}, for problem $(P_{\lambda})$:
\[
\Phi_0^k=\{u\in C^2((0,1))\cap C^1([0,1]): \left(r^{n-i}(u')^i \right)'\geq 0\;\;\mbox{in}\;\;(0,1) ,\, i=1,...,k,\,u'(0)=u(1)=0\}.
\]
We note that the functions in $\Phi_0^k$ are non positive on $[0,1]$. However, if $\left(r^{n-i}(u')^i \right)'> 0$ for all $i=1,\ldots, k$, then any function in $\Phi_0^k$ is negative and strictly increasing on $(0,1)$. This in turn implies that, if we are looking for solutions of ($P_\lambda$) in $\Phi_0^k$, then the parameter $\lambda$ must be positive.

\begin{definition}
Let $\lambda>0$. We say that a function $u \in C([0,1])$ is:
\begin{itemize}
\item[(i)] a {\it classical solution} of ($P_\lambda$) if $u\in \Phi_0^k$ and the equation in ($P_\lambda$) holds;
\item[(ii)] an {\it integral solution} of ($P_\lambda$) if $u$ is absolutely continuous on $(0,1]$, $u(1)=0$, $\int_0^1 r^{n-k}(u'(r))^{k+1} dr <\infty$ and the equality
\[
c_{n,k}r^{n-k}(u'(r))^k = \lambda\int_0^r s^{n-1+\sigma}(1-u(s))^qds,\, \, \text{ a.a. } r\in (0,1),
\]
holds whenever the integral exists.
\end{itemize}
\end{definition}

The concept of integral solution was introduced in \cite{ClFM96} for a more general class of radial operators, see e.g. \cite{ClFM96} and the references therein.  The standard concept of weak solution is equivalent in this case to the notion of integral solution, see \cite[Proposition 2.1]{ClFM96}.

\medbreak

The main goal of this paper is to describe the set of negative bounded radially symmetric solutions to \eqref{Eq:f:pol} in terms of the parameters. Our statements contain some classical and recent results (i.e. $k\geq 1$ and $\sigma= 0$), see \cite{JoLu73, SaVe15}, where the Endem-Fowler transformation was used to have a dynamical system, which allows to obtain the desire results. The key to stablish an Emden-Fowler transformation is finding an explicit singular solution $U$ of equation in \eqref{Eq:f:pol} on $\RR^n$, that is
\begin{equation}\label{singusolu}
U(x)=-|x|^{-\frac{2k+\sigma}{q-k}}\;\;\;\forall x\in \RR^n\setminus \{0\},
\end{equation}
corresponding to the parameter $\lambda =\tilde{\lambda}(k,\sigma)$ (see \eqref{Lambda:Tilde:kalfa}). After a straightforward (and lengthy) computation, we obtain the corresponding dynamical system associated to the Emdem-Fowler transformation, which looks more complicated comparing with a quadratic dynamical system. Our approach make use of a suitable change of variable, which transforms problem $(P_\lambda)$ into an equivalent two-dimensional Lotka-Volterra system. In order to illustrate this approach we use the model equation in the three-dimensional space
\begin{equation}\label{LaEmFow}
\frac{1}{r^2}\frac{d}{dr}\left(r^2\frac{d\phi}{dr}\right)=-r^{2m}\phi^{p+m},\; r>0,
\end{equation}
where $p\in (\frac{1}{2},\infty), m\in (-1,\infty)$ and $p+m>0$. Equation \eqref{LaEmFow} is known as a Generalized Lane-Emdem-Fowler equation \cite{BroVer82}. We note that when $m=0$ equation \eqref{LaEmFow} becomes the classical Lane-Emden equation of index $p$, which arises from stellar dynamic models (see  \cite{BroVer82}). Now we consider the following change of variables
\begin{equation}\label{Milne}
x=-\frac{r^{2m+1}\phi^{p+m}}{\phi'},\;\;y=-\frac{r\phi'}{\phi},
\end{equation}
\begin{equation}\label{BrVe}
r=e^t.
\end{equation}
The change of variables \eqref{Milne} with $m=0$ was introduced by Milne in the early thirties, see \cite{Milne30, Milne32} and also \cite{Chandrasekhar85}.

The authors Van den Broek and Verhulst studied equation \eqref{LaEmFow} using the change of variables \eqref{Milne} and introduced the key change of variable \eqref{BrVe}, which transforms the equation \eqref{LaEmFow} into the equivalent Lotka-Volterra system
\begin{equation*}
\begin{cases}
\frac{dx}{dt}= x[2m+3-x-(p+m)y],
\,\\
\frac{dy}{dt} = y[-1+x+y].
\end{cases}
\end{equation*}
This approach has been used also by other authors, see e.g. \cite{BaPf88, Wola99} and recently \cite{BatLi10, BidGia10, WaZhLi12}.

\medbreak

The paper is organized as follows: Section 2 is devoted to establish the new change of variables to reach a Lotka-Volterra system and to establish the main results, Theorem \ref{Main:1:Thm1} and Theorem \ref{Main:2:Intro}. The second theorem is proved in this section. In Section 3, we prove some general results on existence and nonexistence of solutions of problem $(P_\lambda)$. In Section 4, we identify the class of our Lotka-Volterra system. In Section 5 we make the local analysis of the phase portraits. Finally, in Section 6 we give the proof of Theorem \ref{Main:1:Thm1}. 

\section{New variables and main results}

\medbreak

We consider the radial version of problem \eqref{Eq:f}, as follows
\begin{equation}\label{RaPr:1}
\begin{cases}
\left(r^{n-k}(u')^k\right)'= r^{n-1}f(r,u),& 0<r<1,\\
u(r)<0, &0<r<1,\\
u'(0) = 0,\, u(1) =0.
\end{cases}
\end{equation}
Suppose that the function $f(r,u)$ is of the form
\[
f(r,u)=c_{n,k}^{-1}\,h(r)(1-u)^q,
\]
where $h\in C^1(0,1)$ and $h\neq 0$ on $(0,1)$. Let $u$ be a solution of \eqref{RaPr:1}. We define the function $w=u-1$. Then we see that $w$ solves the equation
\begin{equation}\label{Eq:IVP:0}
\left(r^{n-k}(w')^k\right)'= r^{n-1}h(r)(-w)^q, \;\, r>0.
\end{equation}
Now we are in position to state new variables to obtain a Lotka-Volterra system, which are given by
\begin{equation}\label{newtrans0}
x(t)=r^k\frac{h(r)(-w)^q}{(w')^{k}},\; y(t)=r\frac{w'}{-w}, \;r=e^t,
\end{equation}
where $w'$ stands for $dw/dr$. We point out that this change of variable is well-known in case $k=1$, see e.g. \cite{BaPf88, Wola99, BatLi10, WaZhLi12}. However, in the framework of the $k$-Hessian operator, this transformation seems to be unknown in the literature. Further, we see that such change of variable becomes optimal for problem \eqref{Eq:IVP:0}, since depending on the weight $h$ we obtain either an autonomous or non autonomous Lotka-Volterra system. More precisely, after some calculation one can see that the couple of functions $(x(t), y(t))$ solves the following (non autonomous) Lotka-Volterra system:
\begin{equation}\label{LVS0}
\begin{cases}
\frac{dx}{dt}=x\left[\rho(t)-x-q y\right],
\,\\
\frac{dy}{dt} = y\left[-\frac{n-2k}{k}+\frac{x}{k}+y\right],
\end{cases}
\end{equation}
where $\rho(t)=n+r\frac{h'(r)}{h(r)}$.

Now, in order to transform the problem $(P_\lambda)$ into a Lotka-Volterra system \eqref{LVS0}, we set $h(r)=c_{n,k}^{-1}\lambda r^\sigma$, obtaining the autonomous dynamical system:
\begin{equation}\label{LVS1}
\begin{cases}
\frac{dx}{dt}=x\left[n+\sigma-x-q y\right],
\,\\
\frac{dy}{dt} = y\left[-\frac{n-2k}{k}+\frac{x}{k}+y\right].
\end{cases}
\end{equation}

Note that we can recover $w$ via the formula
\begin{equation}\label{inverse0}
w=-\left[r^{2k}h(r)\right]^{-\frac{1}{q-k}}(xy^k)^{\frac{1}{q-k}}.
\end{equation}

\medbreak

We note that the existence and multiplicity results for $(P_{\lambda})$ are strictly related to the behavior of solutions of this dynamical system by \eqref{inverse0}. The advantage to transform problem $(P_{\lambda})$ into a quadratic dynamical system \eqref{LVS1} lies in the fact that such systems has been extensively studied, see e.g. \cite{Copp66, Baut54, ChTi82, Reyn07}. Even when there is no a complete classification for general quadratic systems, we found a classification of phase portraits according to the space of coefficients for the particular class of Lotka-Volterra system \eqref{LVS1}, see section 4 below.

On the other hand, by the definition of the new variables \eqref{newtrans0}, the region of interest is $\RR^2_+:=\{(x,y)\in\RR^2:x\geq 0,y\geq 0\}$ (a radial solution of $(P_{\lambda})$ is negative and its radial derivative is positive). In this region we find four critical points:  $(0,0), (0,\frac{n-2k}{k}), (n+\sigma,0)$, and
\begin{equation}\label{interiorcritipoint}
(\hat{x},\hat{y}):=\left(\frac{q(n-2k)-(n+\sigma)k}{q-k}, \frac{2k+\sigma}{q-k}\right).
\end{equation}
Note that according to our general assumptions, i.e. $2k<n$ and $k<q$, the first three critical points belong to $\RR^2_+$. The fourth critical point $(\hat{x},\hat{y})$ belongs to the interior of $\RR^2_+$ if, and only if, $q>(n+\sigma)k/(n-2k)$. It is not difficult to see that the orbit $(x(t),y(t))$ of \eqref{LVS1} starts from $(n+\sigma,0)$, see section 5.

\medbreak

Next, in order to estate our main result we introduce a critical exponent of the Joseph-Lundgren type, defined by
\begin{equation}\label{Exp:critical:Intro}
q_{JL}(k,\sigma)=
\begin{cases}
k\frac{k(k+1)n-k^2(2-\sigma)+2k+\sigma-2\sqrt{k(2k+\sigma)[(k+1)n-k(2-\sigma)]}}{k(k+1)n-2k^2(k+3)-2k\sigma-2\sqrt{k(2k+\sigma)[(k+1)n-k(2-\sigma)]}}, & n>2k+8+\frac{4\sigma}{k},\\
\infty, & 2k < n \leq 2k+8+\frac{4\sigma}{k}.
\end{cases}
\end{equation}
The Joseph-Lundgren exponent, i.e.,
\[
q_{JL}(1,0)=\frac{n-2\sqrt{n-1}}{n-4-2\sqrt{n-1}}
\]
was introduced in \cite{JoLu73}. We mention that the exponent $q_{JL}(1,\sigma)$ coincides with the critical exponent founded in \cite{DaDG11} in the study of the solutions of the problem $-\Delta u=|x|^\sigma |u|^{p-1}u$\, in \,$\Omega\subset\RR^n (n\geq 2)$ where $p>1, \sigma>-2$, and $\Omega$ is a suitable domain. We prove that $q_{JL}(k,\sigma)$ plays the same role as the Joseph-Lundgren exponent, that is, as soon as the critical exponent $q_{JL}(k,\sigma)$ is crossed, a drastic change in the number of bounded solutions of \eqref{Eq:f:pol} occurs, see Theorem \ref{Main:1:Thm1} below.

Another important exponent appearing in our main result is given by
\begin{equation}\label{q:ast:kalfa}
q^*(k,\sigma)=\frac{(n+2)k+\sigma(k+1)}{n-2k},
\end{equation}
which is smaller than $q_{JL}(k,\sigma)$. Further $q^*(k,\sigma)$ is bigger than $(n+\sigma)k/(n-2k)$ which ensures that $(\hat{x},\hat{y})$ belongs to the interior of $\RR^2_+$ for all $q\geq q^*(k,\sigma)$. The value $q^*(k,0)$ is well-known as the critical exponent in the study of the quasilinear $k$-Hessian operator, see \cite{Tso90} for more details. The exponent $q^*(1,\sigma)$ is called the Hardy-Sobolev exponent in the study of the Hardy-H\'{e}non equation $-\Delta u=|x|^{\sigma}u^p$, see \cite{PhSo12}.

Now we state our main result.
\begin{theorem}\label{Main:1:Thm1} Let $q>k$ and $n>2k$. Let $q^*(k,\sigma)$ and $q_{JL}(k,\sigma)$ be as in \eqref{Exp:critical:Intro} and \eqref{q:ast:kalfa}, respectively. Then there exists $\lambda^\ast>0$ such that problem $(P_{\lambda})$ admits a maximal bounded solution for $\lambda \in (0, \lambda^\ast)$, admits at least one possible unbounded integral solution for $\lambda = \lambda^\ast$, and it is no classical solutions for every $\lambda > \lambda^\ast$.

Moreover,
\begin{itemize}
	\item[(I)] If $q^\ast(k,\sigma) < q < q_{JL}(k,\sigma)$ and $\lambda$ is close to but not equal to
	\begin{equation}\label{Lambda:Tilde:kalfa}
\tilde{\lambda}(k,\sigma):= c_{n,k}\,\tau_{\sigma}^k(n-2k-k\tau_\sigma),	
	\end{equation}
	where $\tau_\sigma:=\frac{2k+\sigma}{q-k}$, then $(P_\lambda)$ has a large (finite) number of solutions. In addition, if $\lambda=\tilde{\lambda}(k,\sigma)$ then there exists infinitely many solutions of $(P_\lambda)$.
	\item[(II)] If $n>2k+8+\frac{4\sigma}{k}$, $q\geq q_{JL}(k,\sigma)$ and $\lambda\in (0, \lambda^*)$, then there exists only one solution of $(P_\lambda)$. Moreover, $\lambda^*=\tilde{\lambda}(k,\sigma)$.
\end{itemize}
\end{theorem}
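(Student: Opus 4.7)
The approach is to parametrize bounded classical solutions of $(P_\lambda)$ by points on the unique orbit $\gamma(t)=(x(t),y(t))$ of the autonomous Lotka--Volterra system \eqref{LVS1} lying on the one-dimensional unstable manifold of the boundary equilibrium $(n+\sigma,0)$. Using the time-translation symmetry of \eqref{LVS1}, each $t^\ast\in\RR$ produces, via the inversion formula \eqref{inverse0}, a bounded classical solution of $(P_\lambda)$ with
\begin{equation*}
\lambda=\lambda(t^\ast):=c_{n,k}\,x(t^\ast)\,y(t^\ast)^k,
\end{equation*}
the identity being forced by the boundary condition $u(1)=0$; conversely every bounded classical solution is obtained this way. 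Counting solutions of $(P_\lambda)$ thus reduces to counting preimages of $\lambda$ under the real-valued map $\lambda(\cdot):\RR\to(0,\infty)$.

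Differentiating along \eqref{LVS1} yields the key identity
\begin{equation*}
\dot\lambda(t)=c_{n,k}(q-k)\,x(t)\,y(t)^k\,(\hat y-y(t)),\qquad \hat y=\tau_\sigma,
\end{equation*}
so the monotonicity of $\lambda(\cdot)$ is controlled by the position of $\gamma$ relative to the horizontal line $\{y=\hat y\}$. Eigenvalue analysis at $(n+\sigma,0)$ (stable eigenvalue $-(n+\sigma)$, unstable eigenvalue $(2k+\sigma)/k$, with unstable eigendirection pointing into the open first quadrant) yields $\lambda(t)\to 0^+$ as $t\to-\infty$, and the phase portrait classification in Sections 4--5 -- together with a Dulac function of the form $x^a y^b$ to exclude periodic orbits -- shows that $\gamma$ converges to $(\hat x,\hat y)$ as $t\to+\infty$, whence $\lambda(t)\to c_{n,k}\hat x\hat y^k=\tilde\lambda(k,\sigma)$. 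The threshold $\lambda^\ast:=\sup_t\lambda(t)$ then gives the assertions about existence, the integral solution at $\lambda^\ast$, and non-existence beyond, once combined with the super--sub solution framework of Section 3.

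The dichotomy in Parts (I)--(II) is controlled by the linearization of \eqref{LVS1} at $(\hat x,\hat y)$, which (using the two nullcline relations at the equilibrium) reduces to
\begin{equation*}
J=\begin{pmatrix}-\hat x & -q\hat x\\ \hat y/k & \hat y\end{pmatrix},
\end{equation*}
with trace $\hat y-\hat x=(k+1)\tau_\sigma-(n-2k)$ (negative precisely when $q>q^\ast(k,\sigma)$) and determinant $\hat x\hat y(q-k)/k>0$. The discriminant $(\hat y-\hat x)^2-4\hat x\hat y(q-k)/k$ vanishes exactly at $q=q_{JL}(k,\sigma)$, so $(\hat x,\hat y)$ is a stable focus for $q^\ast<q<q_{JL}$ and a stable node for $q\geq q_{JL}$ under the hypothesis $n>2k+8+4\sigma/k$. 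In the focus regime $\gamma$ crosses $\{y=\hat y\}$ infinitely often, so by the sign rule above $\lambda(\cdot)$ has infinitely many alternating local extrema converging to $\tilde\lambda(k,\sigma)$; each successive pair of extrema contributes two further preimages to any value lying strictly between them. Hence for $\lambda$ close to but different from $\tilde\lambda(k,\sigma)$ the equation $\lambda(t)=\lambda$ has an arbitrarily large but finite number of solutions (finite because $|\lambda(t)-\tilde\lambda(k,\sigma)|$ decays exponentially on the asymptotic spiral), whereas $\lambda^{-1}(\tilde\lambda(k,\sigma))$ is infinite since $\lambda(t)-\tilde\lambda(k,\sigma)$ changes sign infinitely many times. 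This proves Part (I).

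For Part (II) the node structure guarantees a non-oscillatory approach to $(\hat x,\hat y)$ along the slow stable eigenvector, and a direct sign check on its components shows that this approach takes place from within $\{y<\hat y\}$. Upgrading this local fact to the global monotonicity $y(t)<\hat y$ for every $t\in\RR$ is the key technical step: the plan is to exhibit a forward-invariant region contained in $\{y<\hat y\}$, bounded by appropriate arcs of the two nullclines $\{x+qy=n+\sigma\}$ and $\{x+ky=n-2k\}$ together with the segment $\{y=\hat y\}$, and to verify by inspection of the flow on each boundary piece (invoking once more the Dulac function to rule out return arcs) that $\gamma$ is trapped inside. Granted this, $\dot\lambda(t)>0$ for every $t\in\RR$, so $\lambda(\cdot):\RR\to(0,\tilde\lambda(k,\sigma))$ is a strict bijection; this gives uniqueness of the classical solution for every $\lambda\in(0,\tilde\lambda(k,\sigma))$ and the identity $\lambda^\ast=\tilde\lambda(k,\sigma)$, with the integral solution at $\lambda=\lambda^\ast$ being the explicit singular solution \eqref{singusolu} corresponding to the stationary orbit at $(\hat x,\hat y)$. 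The main obstacle throughout is precisely this invariant-region step in Part (II); by contrast, the spiral counting of Part (I) is a routine consequence of linearized-focus asymptotics.
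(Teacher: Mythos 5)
Your overall strategy coincides with the paper's: parametrize bounded solutions by points of the heteroclinic orbit of \eqref{LVS1} emanating from $(n+\sigma,0)$, read off $\lambda=c_{n,k}\,x(t_0)y(t_0)^k$ from the boundary condition, and count preimages of $\lambda$ along the orbit. Your identity $\dot\lambda(t)=c_{n,k}(q-k)\,x(t)y(t)^k(\hat y-y(t))$ is correct and in fact makes the counting in Part (I) cleaner and more explicit than the paper's own argument (which counts intersections of the orbit with a horizontal line); the eigenvalue computations at $(n+\sigma,0)$ and $(\hat x,\hat y)$, the identification of the trace and discriminant with $q^\ast(k,\sigma)$ and $q_{JL}(k,\sigma)$, and the whole of Part (I) are sound.

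The gap is in Part (II), exactly at the step you yourself flag as the main obstacle. The region you propose --- bounded by arcs of the nullclines $\{x+qy=n+\sigma\}$ and $\{x+ky=n-2k\}$ together with the segment $\{y=\hat y\}$ --- is not forward invariant. On the segment $\{y=\hat y,\ x>\hat x\}$ one computes, using $\hat x+k\hat y=n-2k$, that $\dot y=\hat y(x-\hat x)/k>0$, so the flow crosses $\{y=\hat y\}$ upward, out of $\{y<\hat y\}$, at every point of that segment; adding nullcline arcs elsewhere on the boundary cannot plug this leak. A structural symptom of the failure is that your proposed region does not depend on whether $q\geq q_{JL}(k,\sigma)$ or $q^\ast(k,\sigma)<q<q_{JL}(k,\sigma)$, yet the conclusion $y(t)<\hat y$ for all $t$ is false in the focus range, where the spiral crosses $\{y=\hat y\}$ infinitely often; so any correct trapping argument must use $q\geq q_{JL}(k,\sigma)$ in an essential way. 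This is what the paper's Lemma \ref{lem:qJL:nodo} does: it erects the curved barrier $y=g(x)=d(n+\sigma-x)^{\alpha}$ joining $(n+\sigma,0)$ to $(\hat x,\hat y)$, with $\alpha=(2k+\sigma+a_\sigma)/(2a_\sigma)$ chosen so that $g'(\hat x)$ is the mean of the two slopes $\gamma_{\pm}$ solving \eqref{gamma:def}; the reality of $\gamma_{\pm}$, i.e. $\varDelta\geq 0$, i.e. $q\geq q_{JL}(k,\sigma)$, is precisely what makes the transversality function $h(x)=F_2(x,g(x))-g'(x)F_1(x,g(x))$ single-signed, so that the orbit cannot cross the barrier and hence stays below $\hat y$. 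You would need to replace your nullcline region by such a curved barrier (or an equivalent device tied to the eigendirections at $(\hat x,\hat y)$) to complete Part (II); the rest of your argument then goes through as written.
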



Similar results are well-known in the literature in case $k=1$ and $\sigma=0$ see e.g. \cite{JoLu73, DoFl07}. In case $q>k$ and $\sigma=0$ problem $(P_{\lambda})$ was recently studied in \cite{SaVe15} by a suitable Emden-Fowler transformation. 

Observe that the function $u(x)= 1+U(x)$ with $x\in B\setminus\{0\}$ and $U$ as in \eqref{singusolu} is an integral solution of problem \eqref{Eq:f:pol} corresponding to the parameter $\lambda = \tilde{\lambda}(k,\sigma)$ provided that $q>q^*(k,\sigma)$ holds. Note that this integral solution can be easily obtained from the constant solution $(\hat{x}, \hat{y})$ of \eqref{LVS1} via the formula \eqref{inverse0}.

Note that the effect of multiplying the nonlinearity by a weight increases the critical exponents $q^\ast(k,0)$ and $q_{JL}(k,0)$ introduced in \cite{SaVe15}. In particular, $q^\ast(k,0)$ is shifted to the value $q^\ast(k,\sigma)$.

\medbreak

Now we first note that system \eqref{LVS1} has nontrivial closed orbits in the first quadrant if, and only if, $k<q$ and
\begin{equation}\label{center}
-\frac{n-2k}{k}(q+1)+(n+\sigma)\left(\frac{1}{k}+1\right)=0,
\end{equation}
by \cite[Theorem 2.1]{CaJi08}. Moreover, $(\hat{x},\hat{y})$ is a center. Further, if $\frac{(n+\sigma)k}{n-2k}<q$, then the center \eqref{interiorcritipoint} belongs to the interior of the first quadrant. Observe that the unique solution $q$ of \eqref{center} is exactly $q^\ast(k,\sigma)$, which is the critical exponent defined in \eqref{q:ast:kalfa}.

We consider the critical exponent problem
\begin{equation}\label{Eq:critical:alfa}
\begin{cases}
S_k(D^2u)= \lambda\,|x|^\sigma (1- u)^{q^*(k,\sigma)}&\mbox{in }\;\; B,\\
u <0 & \mbox{in }\;\; B,\\
u=0 &\mbox{on }\; \partial B.
\end{cases}
\end{equation}
The corresponding Lotka-Volterra system is given by
\begin{equation}\label{LVS:critical}
\begin{cases}
\frac{dx}{dt}=x\left[n+\sigma-x-q^*(k,\sigma) y\right],
\,\\
\frac{dy}{dt} = y\left[-\frac{n-2k}{k}+\frac{x}{k}+y\right].
\end{cases}
\end{equation}
It is remarkable that the line
\begin{equation}\label{invaline:0}
\frac{n-2k}{k}\,x+(n+\sigma)y-\frac{n-2k}{k}\,(n+\sigma)=0,
\end{equation}
which connects the critical points $(n+\sigma,0)$ and $(0,\frac{n-2k}{k})$, is an orbit of system \eqref{LVS:critical}. Moreover, from \eqref{invaline:0} we can obtain an explicit solution $(x(t),y(t))$ of system \eqref{LVS:critical}:

\begin{equation}\label{xysolu:0}
x(t)=(n+\sigma)\,\frac{c}{c+e^{\frac{2k+\sigma}{2k}\,t}},\, y(t)=\frac{n-2k}{k}\frac{e^{\frac{2k+\sigma}{2k}\,t}}{c+e^{\frac{2k+\sigma}{2k}\,t}},
\end{equation}
where $c$ is a positive constant. Since that $(x(t),y(t))$ is a bounded solution of \eqref{LVS:critical}, we may use \eqref{inverse0} to obtain a bounded solution $w$ of \eqref{Eq:IVP:0} with $h(r)=c_{n,k}^{-1}\,\lambda\, r^\sigma$. Indeed, replacing \eqref{xysolu:0} in \eqref{inverse0} with $q=q^\ast(k,\sigma)$, we get
\begin{equation}\label{wsolu:0}
w_c=-\lambda^{-\frac{n-2k}{(2k+\sigma)(k+1)}}\frac{[c\binom{n}{k}
\frac{n+\sigma}{n}(\frac{n-2k}{k})^k]^{\frac{n-2k}{(2k+\sigma)(k+1)}}}{(c+r^\frac{2k+\sigma}{k})^{\frac{n-2k}{2k+\sigma}}}.
\end{equation}
Note that $w_c$ is a scalar factor of a Bliss function, see \cite{Blis30}. Further, note that $w_c(\cdot)$ solves the problem
\begin{equation}\label{bliss:solution}
S_k(D^2u)=\lambda |x|^\sigma(- u)^{q^\ast(k,\sigma)}\,\, \mbox{ in }\,\, \RR^n,
\end{equation}
for all $c>0$. Now, restricting $w_c$ to the unit ball and setting $u=1+w_c$ we have that $u$ is a solution of \eqref{Eq:critical:alfa} if, and only if, there exists a value $d>0$ such that
\begin{equation}\label{Eq:d:alfa:0}
\lambda (d+1)^{k+1} - \binom{n}{k}\left(\frac{n-2k}{k}\right)^k\frac{n+\sigma}{n}\,d^k = 0,
\end{equation}
with $d=c^{-1}$. We can now verify by elementary calculus that \eqref{Eq:d:alfa:0} has either a unique solution $d = k$ if
\begin{equation}\label{Eq:mu:ast}
\lambda =  \binom{n}{k}\frac{n+\sigma}{n}\frac{(n-2k)^k}{(k+1)^{k+1}}:=\mu^\ast(k,\sigma)
\end{equation}
exactly two solutions (depending on $\lambda$ and $\sigma$) $d_- <d_+$ if $0<\lambda < \mu^\ast(k,\sigma)$ and no solutions $d$ if $\lambda > \mu^\ast(k,\sigma)$.

Hence problem \eqref{Eq:critical:alfa} has a solution if, and only if, $\lambda\leq \mu^*(k,\sigma)$. If $\lambda< \mu^*(k,\sigma)$, there exist exactly two solutions of \eqref{Eq:critical:alfa} given by
\begin{equation}\label{Eq:d-+}
v_{\lambda,\sigma} = w_{d_-}|_{B},\;\; V_{\lambda,\sigma} = w_{d_{+}}|_{B}.
\end{equation}
On the other hand, if $\lambda = \mu^*(k,\sigma)$, we have a unique solution of \eqref{Eq:critical:alfa} given by
\begin{equation}\label{Eq:dk}
V_{\sigma}^*=w_k|_{B}.
\end{equation}

Therefore, we conclude that problem \eqref{Eq:critical:alfa} has exactly two solutions $u_{\lambda,\sigma}$, $U_{\lambda,\sigma}$ if $\lambda< \mu^*(k,\sigma)$ and a unique solution $u_{\sigma}^*(k,\sigma)$ if $\lambda=\mu^*(k,\sigma)$, where
\[
u_{\lambda,\sigma}(x)=1-\lambda ^{-\frac{n-2k}{(2k+\sigma)(k+1)}}(-v_{\lambda,\sigma}(x)) ,\;\, U_{\lambda,\sigma}(x)=	1-\lambda ^{-\frac{n-2k}{(2k+\sigma)(k+1)}}(-V_{\lambda,\sigma}(x)).
\]
and
\[
u_{\sigma}^\ast(x) = 1-(\mu^\ast(k,\sigma))^{-\frac{n-2k}{(2k+\sigma)(k+1)}} (-V_{\sigma}^\ast(x)) = 1- \left(\frac{1+k}{1+k|x|^\frac{2k+\sigma}{k}}\right)^{\frac{n-2k}{2k+\sigma}}.
\]		

Thus we have proved the following result.
\begin{theorem}\label{Main:2:Intro}
Let $n>2k$. Consider the functions $v_{\lambda,\sigma}$, $V_{\lambda,\sigma}$, and $V_{\sigma}^*$  as in \eqref{Eq:d-+} and \eqref{Eq:dk}, respectively.  Let $\mu^\ast(k,\sigma)$ as in \eqref{Eq:mu:ast}.
\begin{itemize}
	\item[(i)] If $\lambda\in (0,\mu^\ast(k,\sigma))$, then there exist exactly two solutions of \eqref{Eq:critical:alfa} given by
	\[
u_{\lambda,\sigma}(x)=1-\lambda ^{-\frac{n-2k}{(2k+\sigma)(k+1)}}(-v_{\lambda,\sigma}(x)) ,\;\, U_{\lambda,\sigma}(x)=	1-\lambda ^{-\frac{n-2k}{(2k+\sigma)(k+1)}}(-V_{\lambda,\sigma}(x)).
	\]
	\item[(ii)] If $\lambda = \mu^\ast(k,\sigma)$, then \eqref{Eq:critical:alfa} has a unique solution given by
\[
u_{\sigma}^\ast(x) = 1-(\mu^\ast(k,\sigma))^{-\frac{n-2k}{(2k+\sigma)(k+1)}} (-V_{\sigma}^\ast(x)) = 1- \left(\frac{1+k}{1+k|x|^\frac{2k+\sigma}{k}}\right)^{\frac{n-2k}{2k+\sigma}}.
\]		
\end{itemize}
\end{theorem}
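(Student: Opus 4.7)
The plan is essentially to make rigorous the explicit construction already sketched in the passage preceding the statement, and then to count solutions by a one-variable calculus argument. The core observation is that in the critical case $q=q^\ast(k,\sigma)$ the relation \eqref{center} holds, and the straight line \eqref{invaline:0} joining $(n+\sigma,0)$ and $(0,(n-2k)/k)$ is invariant under the flow of \eqref{LVS:critical}. The first thing I would do is verify this invariance by direct substitution: write $y = (n-2k)(n+\sigma)/((n+\sigma)k) - (n-2k)x/((n+\sigma)k)$ and check, using the identity defining $q^\ast(k,\sigma)$, that $\frac{d}{dt}\bigl[(n-2k)x/k + (n+\sigma)y - (n-2k)(n+\sigma)/k\bigr] = 0$ along solutions.

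Once invariance is established, I would reduce \eqref{LVS:critical} on this line to a scalar Bernoulli-type ODE in $x$ (or equivalently $y$) and integrate it, which produces the one-parameter family \eqref{xysolu:0} with free constant $c>0$; every orbit starting from the saddle $(n+\sigma,0)$ with $t \to -\infty$ must lie on this line because, on it, the system reduces to a one-dimensional autonomous equation whose only nontrivial bounded solutions are those explicit ones. Substituting \eqref{xysolu:0} into the inverse formula \eqref{inverse0} with $h(r)=c_{n,k}^{-1}\lambda r^\sigma$ and $q=q^\ast(k,\sigma)$ yields the explicit Bliss-type family $w_c$ of \eqref{wsolu:0}, which solves \eqref{bliss:solution} on all of $\mathbb{R}^n$ for every $c>0$.

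To obtain solutions of the boundary value problem \eqref{Eq:critical:alfa}, I would impose $u(1)=1+w_c(1)=0$, that is $w_c(1)=-1$. Setting $d=c^{-1}$ and clearing denominators in \eqref{wsolu:0} evaluated at $r=1$ produces exactly the polynomial equation \eqref{Eq:d:alfa:0}. The final step is an elementary analysis of the real-valued function
\[
F(d) = \lambda(d+1)^{k+1} - \binom{n}{k}\Bigl(\frac{n-2k}{k}\Bigr)^{k}\frac{n+\sigma}{n}\,d^{k}, \qquad d>0.
\]
Computing $F'(d)$ and setting it to zero gives, after dividing by $(d+1)^{k}d^{k-1}$, the unique positive critical point $d=k$; checking the second derivative confirms it is the global minimum of $F$ on $(0,\infty)$, and the condition $F(k)=0$ solves to give precisely $\lambda = \mu^\ast(k,\sigma)$ as defined in \eqref{Eq:mu:ast}. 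Thus $F$ has two positive zeros $d_-<k<d_+$ when $\lambda<\mu^\ast(k,\sigma)$, a unique double zero at $d=k$ when $\lambda=\mu^\ast(k,\sigma)$, and no positive zero when $\lambda>\mu^\ast(k,\sigma)$, which matches the statement. Setting $u=1+w_c|_B$ with $c=1/d_{\pm}$ (resp.\ $c=1/k$) and rescaling the amplitude by the factor $\lambda^{-(n-2k)/((2k+\sigma)(k+1))}$ coming from \eqref{wsolu:0} yields the closed-form expressions for $u_{\lambda,\sigma}$, $U_{\lambda,\sigma}$, and $u_\sigma^\ast$ displayed in the theorem.

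The only delicate point I anticipate is the \emph{exactness} of the count, i.e., ruling out solutions not of the form $1+w_c$. This requires using that the radial change of variables \eqref{newtrans0} sets up a bijection between classical (radial) solutions of \eqref{Eq:critical:alfa} and orbits of \eqref{LVS:critical} in the open first quadrant that emanate from $(n+\sigma,0)$ as $t\to -\infty$, together with the invariance of \eqref{invaline:0} forcing every such orbit to coincide with the explicit family. Apart from this identification of admissible orbits, all remaining steps are direct computations.
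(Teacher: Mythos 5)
Your route is the same as the paper's: the entire proof of Theorem \ref{Main:2:Intro} is the explicit construction displayed just before its statement --- invariance of the line \eqref{invaline:0} under \eqref{LVS:critical}, the explicit orbit \eqref{xysolu:0}, the Bliss-type family $w_c$ obtained from \eqref{inverse0}, and the reduction of the boundary condition $w_c(1)=-1$ to the scalar equation \eqref{Eq:d:alfa:0}. Your closing remark on exactness is in fact more explicit than anything in the paper; the clean justification is that uniqueness for the initial value problem \eqref{Eq:v:alfa} together with \eqref{start} identifies every admissible radial solution with the single orbit leaving the saddle $(n+\sigma,0)$, which by invariance of \eqref{invaline:0} is the line segment (the unstable branch), not merely that the system is one-dimensional once restricted to the line.

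There is, however, one step that fails as written: the claim that $F'(d)=0$ has the unique positive root $d=k$. For $F(d)=\lambda(d+1)^{k+1}-Cd^{k}$ with $C=\binom{n}{k}\bigl(\frac{n-2k}{k}\bigr)^{k}\frac{n+\sigma}{n}$, the equation $F'(d)=0$ reads $\lambda(k+1)(d+1)^{k}=Ckd^{k-1}$, whose root depends on $\lambda$ (for $k=1$ it is $d=\frac{C}{2\lambda}-1$, which equals $k$ only when $\lambda=\mu^{\ast}(k,\sigma)$); moreover $F$ need not be convex when $k\ge 2$, so a second-derivative test at one critical point would not yield a global minimum anyway. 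The correct elementary analysis is to divide the equation $F(d)=0$ itself by $(d+1)^{k+1}$ and study $G(d)=Cd^{k}(d+1)^{-k-1}$: one computes
\[
G'(d)=C\,d^{k-1}(k-d)(d+1)^{-k-2},
\]
so $G$ increases on $(0,k)$, decreases on $(k,\infty)$, vanishes at $0$ and at infinity, and attains its maximum $G(k)=Ck^{k}(k+1)^{-k-1}=\mu^{\ast}(k,\sigma)$ at $d=k$ \emph{independently of} $\lambda$; the trichotomy $d_-<k<d_+$ for $\lambda<\mu^{\ast}(k,\sigma)$, the double root $d=k$ for $\lambda=\mu^{\ast}(k,\sigma)$, and nonexistence for $\lambda>\mu^{\ast}(k,\sigma)$ then follow at once from $\lambda=G(d)$. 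With that replacement the rest of your argument goes through and coincides with the paper's.
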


Notice that, for $q=q^*(k,\sigma)$, we have the estimate $\lambda^* \geq \mu^*(k,\sigma)$ by Theorem \ref{Main:1:Thm1} (see above). We mention that, in the case $k=1$ and $\sigma = 0$, the value $\mu{^\ast(1,0)}=\frac{n(n-2)}{4}$ coincides with the extremal value obtained in the classical paper \cite{JoLu73}. See also \cite{GaMa02} and \cite{Isse09}. In case $\sigma=0$, the value $\mu{^\ast(k,0)}$ coincides with the value obtained in \cite{SaVe15}.

\section{Existence and nonexistence of solutions of problem $(P_{\lambda})$}
We first note that equation \eqref{Eq:IVP:0} has the following scaling invariance property: let $\vartheta$ be a solution of \eqref{Eq:IVP:0} satisfying the initial conditions $\vartheta(0) = -1,\, \vartheta'(0) =0$, then the function $\vartheta_a(r):= a^{\delta}\vartheta(a r)$ with $\delta=(2k+\sigma)/(q-k)$ it is also a solution of \eqref{Eq:IVP:0} for all $a>0$.

Let $u$ be a bounded solution of \eqref{RaPr:1} with $f(r,u)=c_{n,k}^{-1}\lambda r^{\sigma}(1-u)^q$ and let $u(0)=A\in (-\infty,0)$. Let $w=u-1$ be a solution of \eqref{Eq:IVP:0}. We make a normalization by $v=w/(1-A)$ and the rescaling
\[
s=\left(\frac{\lambda}{\tilde{\lambda}(k,\sigma)}\right)^{\frac{1}{2k+\sigma}}\left(1-A \right)^{\frac{q-k}{2k+\sigma}}r,\, r>0.
\]
We obtain that $v$ solves the initial value problem
\begin{equation}\label{Eq:v:alfa}
\begin{cases}
\left(s^{n-k}(v')^k\right)'= s^{n-1}\bar{\lambda}(k,\sigma)\,s^\sigma(-v)^q, & s>0,\\
v(0) = -1,\, v'(0) =0,
\end{cases}
\end{equation}
by the boundary conditions of  \eqref{RaPr:1} and $\bar{\lambda}(k,\sigma):=c_{n,k}^{-1}\tilde{\lambda}(k,\sigma)$. Note that by the scaling invariance property is it enough focused our study on \eqref{Eq:v:alfa}. Further, the change of variable \eqref{newtrans0} applied to \eqref{Eq:v:alfa} yields with the same Lotka-Volterra system \eqref{LVS1}. Moreover we can write $v$ in terms of the solution $(x,y)$ of system \eqref{LVS1} and the new parameters, that is
\begin{equation}\label{v:s:t}
v=-\left[s^{2k+\sigma}\bar{\lambda}(k,\sigma)\right]^{-\frac{1}{q-k}}(xy^k)^{\frac{1}{q-k}}.
\end{equation} 

We will need the following two technical lemmas.

\begin{lemma}\label{Uniq:sol}
Let $q\geq q^\ast(k,\sigma)$. Then there exists a unique global solution $v$ of \eqref{Eq:v:alfa} in the regularity class $C^2(0,\infty)\cap C^1[0,\infty)$.
\end{lemma}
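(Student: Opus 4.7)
My plan is to combine a fixed-point argument near $s=0$ with the phase-plane information furnished by the Lotka--Volterra system \eqref{LVS1}. Integrating the equation once and using $v'(0)=0$ gives
\[
s^{n-k}(v'(s))^k = \bar{\lambda}(k,\sigma)\int_0^s \tau^{n-1+\sigma}(-v(\tau))^q\,d\tau,
\]
and extracting the $k$-th root (permitted since $v'\ge 0$) and integrating again with $v(0)=-1$ yields an equivalent fixed-point equation $v=T(v)$ on continuous functions. The double integration produces a factor of order $s^{2+\sigma/k}$ that regularizes the apparent singularity of the coefficient $s^{k-n}$ at $s=0$.

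\textbf{Local existence/uniqueness and monotonicity.} I apply the Banach contraction principle to $T$ on the closed set $\{v\in C([0,s_0]):-3/2\le v(s)\le -1/2\}$ for $s_0>0$ small. Since $v$ is bounded away from $0$ and $-\infty$, the nonlinearity $v\mapsto (-v)^q$ is uniformly Lipschitz there, and the two integrations supply the smallness needed to make $T$ a strict contraction. This delivers a unique $v\in C([0,s_0])$, which the integral formula upgrades to $v\in C^1[0,s_0]\cap C^2(0,s_0]$. The same formula shows $v'(s)>0$ for $s>0$, so $v$ is strictly increasing and automatically $v\ge -1$ on its interval of existence.

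\textbf{Global continuation.} Standard ODE theory extends the local solution to a maximal interval $[0,S_{\max})$, and breakdown can only occur through $v\to 0^-$ (the only other point where Lipschitz regularity degenerates). To exclude this, I invoke the Lotka--Volterra representation: via \eqref{newtrans0} the solution corresponds to an orbit $(x(t),y(t))$ of \eqref{LVS1} emanating from the saddle point $(n+\sigma,0)$ at $t=-\infty$, as a short asymptotic computation at $s\to 0^+$ confirms. Because $q\ge q^*(k,\sigma)$ forces the interior equilibrium $(\hat x,\hat y)$ to lie strictly inside $\RR^2_+$, the phase-plane analysis of \eqref{LVS1}---axis invariance plus confinement of the unstable-manifold trajectory of $(n+\sigma,0)$ to a bounded subset of $\RR^2_+$, as furnished by the classification of Sections~4 and~5---guarantees the orbit is defined for all $t\in\RR$ with $xy^k$ bounded and strictly positive. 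Reading this back through \eqref{v:s:t} yields $v(s)<0$ for every $s>0$, hence $S_{\max}=\infty$, and local uniqueness propagates globally along the Lipschitz regime.

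\textbf{Main obstacle.} The delicate points are the singular initial layer and the exclusion of $v\to 0$. The first is handled by the iterated integral formulation, whose regularizing factor $s^{2+\sigma/k}$ compensates both the blow-up of $s^{k-n}$ and the fact that the $k$-th root is only H\"older when $k\ge 2$. The second relies on the full structural force of the hypothesis $q\ge q^*(k,\sigma)$: this is exactly the bound that locates $(\hat x,\hat y)$ inside the first quadrant and thereby confines the Lotka--Volterra orbit, which is effectively the only way to prevent $v$ from reaching $0$ at a finite $s$.
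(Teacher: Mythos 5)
Your local existence--uniqueness step (Banach contraction on $\{v\in C([0,s_0]):|v+1|\le 1/2\}$ for small $s_0$, followed by bootstrapping regularity from the twice-integrated equation) is essentially the paper's argument for uniqueness and is sound. The gap is in the global continuation. You correctly reduce the problem to excluding $v(S)=0$ at some finite $S$, but the mechanism you invoke --- confinement of the Lotka--Volterra orbit --- is not available at this stage and, as stated, is circular. The correspondence \eqref{newtrans0}--\eqref{v:s:t} between $v$ and an orbit of \eqref{LVS1} in $\RR^2_+$ holds only on the interval where $v<0$ and $v'>0$; if $v(S)=0$ with $v'(S)>0$, then $y=s v'/(-v)\to+\infty$ as $s\uparrow S$, i.e.\ the orbit blows up in \emph{finite} time $t$, which is a priori possible for a quadratic system (the $y$-equation carries a $+y^2$ term). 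Sections 4 and 5 do not supply an independent confinement result you can lean on: the assertion that the relevant orbit emanates from $(n+\sigma,0)$ and stays bounded is itself derived there \emph{from} Lemma \ref{Uniq:sol}. Moreover, the hypothesis you isolate --- that $q\ge q^*(k,\sigma)$ places $(\hat x,\hat y)$ in the interior of $\RR^2_+$ --- cannot be the operative one, since $(\hat x,\hat y)$ is already interior for all $q>(n+\sigma)k/(n-2k)$, a range including $q<q^*(k,\sigma)$ where the paper explicitly declines to claim global existence (there $(\hat x,\hat y)$ is unstable and no confinement is expected).

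What actually makes $q\ge q^*(k,\sigma)$ work in the paper is a Pohozaev-type sign condition, not phase-plane geometry. After the rescaling $\tau=\bar\lambda^{1/(2k+\sigma)}s$ the problem takes the quasilinear form treated in \cite{ClMM98}, and the functional $B(r)=\int_0^r s^{(\alpha-\beta)(q+1)/(\beta+1)}\bigl(a(s)s^{\theta}\bigr)'\,ds$ with $\alpha=n-k$, $\beta=k$, $a(s)=s^\sigma$ satisfies $B(r)\le 0$ for all $r>0$ if and only if $q\ge q^*(k,\sigma)$; Theorem 4.1 of \cite{ClMM98} then yields the global negative solution. To repair your argument you would need either to import such a result, or to exhibit directly a forward-invariant bounded region of $\RR^2_+$ for \eqref{LVS1} containing the local orbit and bounded away from $y=\infty$ --- neither of which your proposal provides.
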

\begin{proof}
Let $\tau = \bar{\lambda}^{\frac{1}{2k+\sigma}}s$ and  set $z(\tau) = v(s)$. Then we may rewrite \eqref{Eq:v:alfa} as
\begin{equation}\label{Eq:w:aux}
\begin{cases}
\left(\tau^{n-k}(z')^k\right)'= \tau^{n-1+\sigma}(-z)^q, & \tau>0,\\
z(0) = -1,\\
z'(0) =0.
\end{cases}
\end{equation}
Defining $B(r)=\int_0^r s^{\frac{(\alpha-\beta)(q+1)}{\beta+1}}(a(s)s^{\theta})'ds$, $r>0$, with $\alpha=n-k$, $\beta=k$, $\gamma=n-1$, $a(s)=s^\sigma$ and $\theta=[(\gamma+1)(\beta+1)-(\alpha-\beta)(q+1)]/(\beta +1)$, we obtain $B(r)\leq 0$ for $r>0$ if, and only if, $q\geq q^*(k,\sigma)$. Then the global existence of \eqref{Eq:w:aux} follows from \cite[Theorem 4.1]{ClMM98}. Now, for the uniqueness we define the map $T: \mathcal{B} \to  \mathcal{B}$, where $ \mathcal{B}:=\{ z\in C[0,t_0]\;:\; z(0)=-1 \text{ and } |z+1| \leq 1/2\}$, by
\[
T(z)(r):= -1 + \int_0^r \left( \frac{1}{t^{n-k}}\int_0^t s^{n-1+\sigma}(-z(s))^q ds\right)^{\frac{1}{k}}dt,\; r\in [0,t_0].
\]
Using the arguments given in the proof of \cite[Lemma 4.1]{SaVe15}, we see that $T$ admits a unique fixed point by the contraction mapping principle.
\end{proof}

\begin{lemma}\label{max:sol}
Let $n > 2k$, $q > k$ and $\lambda_0 >0$. Assume that there exists a classical solution of
\begin{equation}
\begin{cases}\label{Eq:2:0}
c_{n,k}r^{1-n}\left(r^{n-k}(w')^k \right)' = \lambda_0\,r^{\sigma}(1-w)^q\,,\quad 0<r<1,\\
w  < 0 \,, \hspace{4.65cm} 0<r<1,\\
w'(0)  =0,\, w(1)=0, &
\end{cases}
\end{equation}

Then, for any $\lambda\in (0,\lambda_0)$, problem $(P_\lambda)$ has a maximal bounded solution. Moreover, the maximal solutions form a decreasing sequence as $\lambda$ increases.
\end{lemma}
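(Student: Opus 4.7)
The plan is to apply the method of sub- and supersolutions recalled in Section~1, and then invoke the maximal-solution construction developed in~\cite{SaVe15}. First I would observe that for any $\lambda \in (0, \lambda_0)$ the given classical solution $w_0$ of \eqref{Eq:2:0} satisfies
\[
c_{n,k}r^{1-n}\left(r^{n-k}(w_0')^k\right)' = \lambda_0\, r^\sigma (1-w_0)^q \;\geq\; \lambda\, r^\sigma (1-w_0)^q,
\]
because $\lambda < \lambda_0$ and $1-w_0 > 0$. Combined with $w_0(1)=0$ and $w_0 \in \Phi_0^k \subset \Phi^k$, this shows that $w_0$ is a subsolution of $(P_\lambda)$ in the sense introduced in Section~1. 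Since the trivial function $\equiv 0$ is always a supersolution and $w_0 \leq 0$ on $[0,1]$, the version of the sub/super-solution method for the $k$-Hessian in \cite[Theorem~3.3]{Wang94} produces a classical solution $u_\lambda \in \Phi_0^k$ of $(P_\lambda)$ with $w_0 \leq u_\lambda \leq 0$.

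Next I would extract the maximal solution exactly as in \cite{SaVe15}. Consider the family $\mathcal{S}_\lambda$ of all subsolutions $u$ of $(P_\lambda)$ lying between $w_0$ and $0$; it is nonempty, uniformly bounded below by $w_0$, and closed under taking pointwise suprema of finite collections (since the supremum of two subsolutions in the admissible class is again a subsolution). A monotone iteration starting from the ordered pair $(w_0, 0)$ and using the comparison principle for $S_k$ on $\Phi^k_0$ converges to a classical solution $v_\lambda \in \Phi_0^k$ which dominates every element of $\mathcal{S}_\lambda$; by definition this is the maximal bounded solution of $(P_\lambda)$. Finally, for the monotonicity, suppose $0 < \lambda_1 < \lambda_2 < \lambda_0$ and let $v_{\lambda_2}$ be the maximal solution of $(P_{\lambda_2})$. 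Since
\[
c_{n,k}r^{1-n}\bigl(r^{n-k}(v_{\lambda_2}')^k\bigr)' = \lambda_2 r^\sigma(1-v_{\lambda_2})^q \geq \lambda_1 r^\sigma(1-v_{\lambda_2})^q,
\]
$v_{\lambda_2}$ is a subsolution of $(P_{\lambda_1})$, and also lies above $w_0$ (which is a subsolution of $(P_{\lambda_2})$ by the same argument, so $w_0 \leq v_{\lambda_2}$). Maximality of $v_{\lambda_1}$ then yields $v_{\lambda_2} \leq v_{\lambda_1}$ on $[0,1]$, proving that the maximal solutions decrease as $\lambda$ increases.

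The main obstacle is the middle step: constructing the maximal solution as a genuine classical element of $\Phi_0^k$ (rather than merely a pointwise supremum) requires the comparison principle of \cite{Wang94} within the $k$-admissible class together with a stability argument ensuring that the monotone limit still satisfies $S_i(D^2 v_\lambda) \geq 0$ for $i=1,\dots,k$. These ingredients are already packaged in the framework developed in \cite{SaVe15}, so at the level of this lemma the real work is simply to check that the sub/super-solution pair $(w_0, 0)$ triggers that machinery, which is what the argument above does.
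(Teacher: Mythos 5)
Your overall strategy---produce a subsolution of $(P_\lambda)$ from the solution at $\lambda_0$, pair it with the trivial supersolution $0$, apply \cite[Theorem~3.3]{Wang94}, and then run a monotone iteration to extract the maximal solution, with monotonicity in $\lambda$ following from maximality---is the same as the paper's. The one genuine difference is the choice of subsolution: you use $w_0$ itself, noting $\lambda_0 r^\sigma(1-w_0)^q\geq\lambda r^\sigma(1-w_0)^q$, whereas the paper applies a nonlinear truncation $\Phi(w_0)$ built from the integrals $h,\tilde h$ of $1/g,1/\tilde g$ and uses convexity of $\Phi$ to verify $S_k(D^2\Phi(w_0))\geq\lambda r^\sigma(1-\Phi(w_0))^q$. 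For the lemma as stated your shortcut is legitimate, since a classical solution is continuous up to the boundary and hence bounded; the point of the paper's construction is that $\Phi$ is bounded on $(-\infty,0]$ because $q>k$, so it manufactures a \emph{bounded} subsolution even from an unbounded solution at $\lambda_0$, which is the robustness wanted elsewhere in this circle of ideas (cf.\ \cite{SaVe15}). You lose nothing here, but you would lose that generality.

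The middle step is where your write-up has real gaps. First, the claim that the pointwise supremum of two subsolutions in the admissible class is again a subsolution is not justified: $\Phi^k$ consists of $C^2$ functions, and $\max(u_1,u_2)$ is in general not $C^2$; this closure property is neither proved nor actually needed, so it should be dropped. Second, and more importantly, dominating every element of your family $\mathcal{S}_\lambda$ (subsolutions squeezed between $w_0$ and $0$) is \emph{not} the notion of maximality used in the paper, which requires domination of \emph{every} subsolution, including ones that dip below $w_0$. The paper closes this by making the iteration independent of the subsolution: $u_1$ solves $S_k(D^2u_1)=\lambda|x|^{\sigma}$, $u_i$ solves $S_k(D^2u_i)=\lambda|x|^{\sigma}(1-u_{i-1})^q$, and the comparison principle of \cite{TrXu99} shows that any subsolution $v$ satisfies $v\leq u_1$, hence inductively $v\leq u_i$ and $v\leq u_{\max}=\lim_i u_i$. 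Your ``monotone iteration starting from $(w_0,0)$'' is essentially this scheme, but you should state it and its comparison step explicitly and derive maximality from the independence of the scheme from $w_0$, not from the auxiliary family $\mathcal{S}_\lambda$. The monotonicity argument in $\lambda$ at the end coincides with the paper's and is fine.
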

\begin{proof}
Fix $\lambda\in (0,\lambda_0)$ and define the functions
\[
g(t) = \left[\lambda_0(1+t)^q\right]^{1/k}\, \text{ and }\;\, \tilde{g}(t) = \left[\lambda(1+t)^q\right]^{1/k},\text{ for all }\; t\geq 0.
\]
Set $\Phi(s) = \tilde{h}^{-1}(h(s))$ ($s\leq 0$) with $h$ and $\tilde{h}$ given by
\[
h(s) = \int_s^0 \frac{1}{g(-t)}\,dt\; \text{ and }\;\, \tilde{h}(s) = \int_s^0 \frac{1}{\tilde{g}(-t)}\,dt, \; s\leq 0.
\]
Since $q>k$, we have that $\lim_{s\to -\infty}h(s)$ exists and hence $\Phi$ is bounded by \cite[Lemma 2.1 (i)-(ii)]{SaVe15}. Next, by \eqref{Eq:2:0} and the convexity of $\Phi$ by \cite[Lemma 2.1 (iii)]{SaVe15} we have
\begin{eqnarray*}
S_k(D^2 \Phi(w))&=&c_{n,k}kr^{1-k}(\Phi'(w)w')^{k-1}\left(\Phi''(w)(w')^2+\Phi'(w)w''+\frac{n-k}{k}\frac{\Phi'(w)w'}{r}\right)\\
&\geq & c_{n,k}kr^{1-k}(\Phi'(w))^k(w')^{k-1}\left(w''+\frac{n-k}{k}\frac{w'}{r}\right)\\
& = &(\Phi'(w))^k S_k(D^2 w) = \frac{(\tilde{g}(-\Phi(w)))^k}{(g(-w))^k}S_k(D^2 w) = \lambda r^{\sigma}(1-\Phi(w))^q.
\end{eqnarray*}
Therefore,  $\Phi(w)$ is a bounded subsolution of $(P_\lambda)$ and hence by the method of super and subsolutions we have, by \cite[Theorem 3.3]{Wang94}, a solution $u\in L^{\infty}((0,1))$ of $(P_\lambda)$ with $\Phi(w) \leq u \leq 0$. Now, to prove that ($P_{\lambda}$) admits a maximal solution, we consider $u_1$ the solution of
\begin{equation*}
\begin{cases}
S_k(D^2 u_1)= \lambda |x|^{\sigma}&\mbox{in }\;\; B,\\
u_1=0 &\mbox{on }\; \partial B.
\end{cases}
\end{equation*}
Since $u$ is in particular a subsolution of ($P_{\lambda}$), we have $u\leq u_1$ on $B$ by the comparison principle, see \cite{TrXu99}. Next, we define $u_i$ ($i=2,3,\ldots$) as the solution of
\begin{equation*}
\begin{cases}
S_k(D^2 u_i)= \lambda |x|^{\sigma}(1-u_{i-1})^q &\mbox{in }\;\; B,\\
u_i=0 &\mbox{on }\; \partial B.
\end{cases}
\end{equation*}
Using again the comparison principle we obtain a decreasing sequence of $u_i$ bounded from below by $u$ and by 0 from above. Hence, we can pass to the limit and we obtain a solution $u_{max}$ of ($P_{\lambda}$), which is maximal since the recursive sequence $\{u_i\}$ does not depend on the subsolution $u$. Now, let $\lambda_1<\lambda_2$ and $u_{\lambda_1}$, $u_{\lambda_2}$ be maximal solutions of $(P_{\lambda_i})$ ($i=1,2$), respectively. Note that $u_{\lambda_2}$ is a subsolution of $(P_{\lambda_1})$, whence $u_{\lambda_2} \leq u_{\lambda_1}$ by the maximality of $u_{\lambda_1}$.
\end{proof}

For $R>1$, let $B_R$ be a ball centered at zero with radius $R$ such that $\overline{B}\subset B_R$ and let $\eta$ be the solution of
\begin{equation*}
\begin{cases}
S_k(D^2\eta)= 1 &\mbox{in }\;\; B_R,\\
\eta=0 &\mbox{on }\; \partial B_R.
\end{cases}
\end{equation*}
Then there exists a negative constant $\beta$ such that $\eta<\beta<0$ on $\partial{B}$.
Set $M=max_{x\in\overline{B}}\,|\eta|$ and take $\lambda<(1+M)^{-q}$. Then
\[
S_k(D^2\eta)=1>\lambda(1+M)^q\geq\lambda(1-\eta)^q\geq\lambda |x|^{\sigma}(1-\eta)^q \;\;\mbox{in}\;\;B.
\]
By \cite[Theorem 3.3]{Wang94}, for any $\lambda\in (0,(1+M)^{-q})$ there exists a solution $u_{\lambda}$ of $(P_\lambda)$. Thus we may define
\begin{equation}\label{lambda-ast}
\lambda^\ast=\sup\{\lambda>0:\, \text{ there is a solution } u_{\lambda}\in C^2(B) \text{ of } \eqref{Eq:f:pol}\}.
\end{equation}
Hence $\lambda^\ast>0$.

To see that $\lambda^\ast$ is finite we consider the inequality
\begin{equation}\label{Eq:Nineq}
\Delta u\geq C(n,k)[S_k(D^2 u)]^\frac{1}{k},\; u\in \Phi^k(B).
\end{equation}
See e.g. \cite{Wang09}. Consider the eigenvalue problem
\begin{equation*}
(E_m)\;\;
\begin{cases}
-\Delta u= \lambda m(x)u &\mbox{in }\;\; B,\\
u=0 &\mbox{on }\; \partial B,
\end{cases}
\end{equation*}
where $m(x):=|x|^{\frac{\sigma}{k}}$. It is known that problem $(E_m)$ has a first eigenvalue, $\lambda_{1,m}>0$, associated with an eigenfunction $\phi_{1,m}>0$. Let $\lambda\in (0,\lambda^\ast)$ and let $u$ be a solution of problem $(P_\lambda)$. Then, using \eqref{Eq:Nineq}, we obtain
\[
\Delta u\geq C(n,k)\lambda^{\frac{1}{k}}|x|^{\frac{\sigma}{k}}(1-u)(1-u)^{\frac{q-k}{k}}\geq C(n,k)\lambda^{\frac{1}{k}}|x|^{\frac{\sigma}{k}}(1-u)\geq C(n,k)\lambda^{\frac{1}{k}}|x|^{\frac{\sigma}{k}}(-u),
\]
which in turn implies $\lambda<\left(\frac{\lambda_{1,m}}{C(n,k)}\right)^k$. Thus $\lambda^{\ast}$ is finite.

Now, let $\lambda\in (0,\lambda^{\ast})$. Then $u_\lambda$ is a maximal bounded solution of ($P_{\lambda}$) by Lemma \ref{max:sol} applied with $\lambda_0 \in (\lambda, \lambda^{\ast})$.

\medbreak

Now let $\lambda_i$ be an increasing sequence such that $\lambda_i\rightarrow \lambda^\ast$ as $i\rightarrow +\infty$ and let $u_{\lambda_i}$ be a maximal solution of $(P_{\lambda_i})$. By Lemma \ref{max:sol}, for all $r\in [0,1]$, we have $u_{\lambda_{i+1}}(r)\leq u_{\lambda_{i}}(r)\leq 0$. On the other hand, integrating the equation in $(P_{\lambda_i})$, we obtain
\[
u_{\lambda_i}(r)=\int_{r}^{1}\left[c_{n,k}^{-1}\,\tau^{k-n}\int_{0}^{\tau}s^{n-1+\sigma}\lambda_{i}(1-u_{\lambda_i}(s))^q\,ds\right]^{\frac{1}{k}}d\tau.
\]
Now, applying twice the monotone convergence theorem we conclude that
\[
u^\ast(r):=\lim_{i\rightarrow +\infty}u_{\lambda_i}(r), \;\;\; \text{ exists a.a. } r\in  (0,1)
\]
and
\[
u^\ast(r)=\int_{r}^{1}\left[c_{n,k}^{-1}\,\tau^{k-n}\int_{0}^{\tau}s^{n-1+\sigma}\lambda^\ast(1-u^\ast(s))^q\,ds\right]^{\frac{1}{k}}d\tau, \;\;\; \text{ a.a. } r\in  (0,1).
\]
The assertion concerning the non existence of solutions follows directly from the definition of $\lambda^\ast$. 


\section{Classification of our Lotka-Volterra system} 
We use the notation of \cite{Reyn07} to define
\begin{align*}
P(x,y) &= (n+\sigma)x - x^2 - qxy\\
Q(x,y) & = -\frac{n-2k}{k}\,y + \frac{1}{k}\,xy + y^2
\end{align*}
Now in order to compute the Poincar{\'e} index of a (finite) critical point $(x_0,y_0)$ we define
\[
\Lambda (x_0,y_0) = \partial_x P(x_0,y_0)\partial_yQ(x_0,y_0)- \partial_y P(x_0,y_0)\partial_xQ(x_0,y_0).
\]
For system \eqref{LVS1} we have that the critical points $(0,0)$, $(n+\sigma, 0)$ and $(0,(n-2k)/k)$ are saddle points with Poincar{\'e} index $-1$. The fourth critical point $(\hat{x},\hat{y})$ is an antisaddle with Poincar{\'e} index $1$. According to \cite[Section 2.3.1]{Reyn07} the notation for the saddle points is $e^{-1}$ and for antisaddle points $e^1$. Thus system \eqref{LVS1} admits the combination $e^{-1}e^{-1}e^{-1}e^{1}$ of the finite critical points.

Next, in order to see the behavior of system \eqref{LVS1} near infinity we use polar coordinates to compute the critical points at infinity. System \eqref{LVS1} can be written as follows
\begin{align}
r' & =A_1(\theta) + rB_1(\theta)\\
\theta' & = A_2(\theta) + r B_2(\theta).
\end{align}
Since the critical points at infinity are characterized by the condition $\theta'=0$ (see \cite[Chapter 1]{Reyn07}). The dominant term at large $r$ is $B_2(\theta)$ which is given by $B_2(\theta) = \cos\theta \sin\theta[(1/k +1)\cos\theta + (q+1)\sin\theta]$. The solutions of $B_2(\theta)=0$ are given by $\theta=0(\pi)$, $\theta=\pi/2(3\pi/2)$, and $\theta=\arctan(-(k+1)(k(q+1)))$. Actually, this solutions correspond (modulo $\pi$) to three infinite critical points. Now, we use the Poincar{\'e} sphere. For this, we introduce the change of variable $z=1/x$ and $u=y/x (=\tan\theta)$. Thus system \eqref{LVS1} becomes
\begin{align}
zz'&=-z[-1+(n+\sigma)z -qu]=:\hat{P}(z,u)\label{sys:zz}\\
zu' & =\frac{k+1}{k}\,u - \left(n+\sigma + \frac{n-2k}{k}\right)zu + (q+1)u^2 =:\hat{Q}(z,u).\label{sys:zu}
\end{align}
The critical points at infinity are given by $(0,u)$, where $u$ is a solution of
\begin{equation}\label{critical:infty}
\frac{k+1}{k}\,u + (q+1)u^2=0.
\end{equation}
The eigenvalues of the linearized system \eqref{sys:zz}-\eqref{sys:zu} near infinity are given by
\begin{align*}
\lambda_z&=\partial_z\hat{P}(0,u)=1+qu\\
\lambda_u&=\partial_u\hat{Q}(0,u)=\frac{k+1}{k} + 2qu,
\end{align*}
where $u=0$, $u=-(k+1)/(k(q+1))$ are the solutions of \eqref{critical:infty}. Defining $\Lambda_c$ as the product of the eigenvalues of the linearized system \eqref{sys:zz}-\eqref{sys:zu}, it easy to see that $\Lambda_c > 0$ for all solution $u$ of \eqref{critical:infty}. For the third infinite critical point located on the $y$-axis at infinity, we rotate the axes such that the critical point under consideration is at the end of the $x$ axis so that $\Lambda_c = -1(-1-1/k)>0$, cf.\cite[Page 27]{Reyn07}. Hence system \eqref{LVS1} admits the combination $E^1E^1E^1$ of infinite critical points, where $E^1$ denotes a node. See \cite[Section 2.3.2]{Reyn07} for a more general presentation. Hence our system is classified in the class $e^{-1}e^{-1}e^{-1}e^{1}E^1E^1E^1$. This combination is studied in \cite[Sections  3.4.1 and 11.2.2]{Reyn07}. See \cite[Section 11.2.2, figure 11.5 $(\underline{a})$-$(\underline{b})$]{Reyn07} for the corresponding phase portraits.

Since the center case $q=q^\ast(k,\sigma)$ was already studied in Section 2, we focus only on orbits connecting the critical points $(n+\sigma,0)$ and the point $(\hat{x},\hat{y})$ located at the interior of the first quadrant.

\section{Local analysis at the point $(\hat{x},\hat{y})$}
This section contains the hardest part of this work. The main difficulties here are to obtain the critical exponent $q_{JL}(k,\sigma)$ and to define an auxiliary variable $a_{\sigma}$ defined below, which is the key to determinate the stability of our system. 

Next we show that the orbits of system \eqref{LVS1} start from $(n+\sigma,0)$ and end at $(\hat{x},\hat{y})$. By \eqref{newtrans0} and \eqref{Eq:v:alfa}, the function $y = y(t)$ satisfies
\begin{equation}\label{limi:y}
\lim_{t\to -\infty}y(t)=\lim_{r\to 0}r\,\frac{v'(r)}{-v(r)} = 0
\end{equation}
and for $x=x(t)$, we have
$
\lim_{t\to -\infty}x(t)=\lim_{r\to 0}\bar{\lambda}(k,\sigma)[-v(r)]^q\frac{r^{k+\sigma}}{[v'(r)]^k}.
$
Now, by $\eqref{Eq:IVP:0}$ and L'hospital's rule, we have
\[
\lim_{r\to 0}\frac{[v'(r)]^k}{ r^{k+\sigma}}=\lim_{r\to 0}\frac{\bar{\lambda}(k,\sigma)\int_{0}^{r}\tau^{n-1+\sigma}[-v(\tau)]^q d\tau}{r^{n+\sigma}}=\lim_{r\to 0}\frac{\bar{\lambda}(k,\sigma)r^{n-1+\sigma}[-v(r)]^q}{(n+\sigma)r^{n-1+\sigma}}=\frac{\bar{\lambda}(k,\sigma)}{n+\sigma}.
\]
Then
\begin{equation}\label{limi:x}
\lim_{t\to -\infty}x(t)=\frac{\bar{\lambda}(k,\sigma)(n+\sigma)}{\bar{\lambda}(k,\sigma)}=n+\sigma.
\end{equation}
From \eqref{limi:y} and \eqref{limi:x} we conclude that
\begin{equation}\label{start}
\lim_{t\to -\infty}(x(t),y(t))=(n+\sigma,0).
\end{equation}

\begin{remark}
The variables $x$ and $y$ in \eqref{newtrans0} are nonnegative since the unique global solution $v$ of \eqref{Eq:v:alfa} is negative and increasing by Lemma \ref{Uniq:sol}, which in turn imply that our study of the phase portrait must to be restricted only to positive orbits starting from $(n+\sigma,0)$ and ending in $(\hat{x},\hat{y})$ by the previous section.
\end{remark}

Since, we are looking for orbits that start from $(n+\sigma,0)$ and end in $(\hat{x},\hat{y})$, we only need a local analysis at the critical point $(\hat{x},\hat{y})$. To this end, we consider the linearization of \eqref{LVS1} at the critical point $(\hat{x},\hat{y})$. The Jacobian matrix at the point $(\hat{x},\hat{y})$ is given by
\[
J=\left(\begin{array}{cc}
-\frac{q(n-2k)-(n+\sigma)k}{q-k}& -\frac{q[q(n-2k)-(n+\sigma)k]}{q-k}\\
\,\\
\frac{2k+\sigma}{k(q-k)} & \frac{2k+\sigma}{q-k}
\end{array}
\right)
\]
The eigenvalues of $J$ are
$
\lambda_{\pm} =\frac{1}{2} \text{tr} J \pm \frac{1}{2} \sqrt{(\text{tr} J)^2 - 4 \text{det} J},
$
where $\text{tr} J = \frac{2k+\sigma-[q(n-2k)-(n+\sigma)k]}{q-k}$ and $\text{det} J = \frac{(2k+\sigma)[q(n-2k)-(n+\sigma)k]}{k(q-k)}$. The discriminant $\Delta$ is given by
\begin{equation}\label{DeltamodelI}
\varDelta(a_\sigma)=(q-k)^{-2}\left\{[(2k+\sigma)-a_\sigma]^2-4\frac{(2k+\sigma)(q-k)}{k}\,a_\sigma\right\},
\end{equation}
where
\begin{equation}\label{Eq:aalfa}
a_\sigma:=q(n-2k)-(n+\sigma)k.
\end{equation}

The location of the eigenvalues $\lambda_{\pm}$ on the complex plane is determined as follows:
\begin{itemize}
	\item[(i)] If $(2k+\sigma)-a_\sigma>0$ and $\varDelta \geq 0$, then the eigenvalues $\lambda_{\pm}$ are real positive numbers.
	\item[(ii)] If $(2k+\sigma)-a_\sigma>0$ and $\varDelta <0$, then the eigenvalues $\lambda_{\pm}$ are complex numbers with positive real part.
	\item[(iii)] If $(2k+\sigma)-a_\sigma<0$ and $\varDelta <0$, then the eigenvalues $\lambda_{\pm}$ are complex numbers with negative real part.
	\item[(iv)] If $(2k+\sigma)-a_\sigma<0$ and $\varDelta \geq0$, then the eigenvalues $\lambda_{\pm}$ are negative real numbers.
	\item[(v)] If $(2k+\sigma)-a_\sigma=0$, then the eigenvalues $\lambda_{\pm}$ are purely imaginary.
\end{itemize}

The case $(2k+\sigma)-a_\sigma>0$ corresponds to instability, $(2k+\sigma)-a_\sigma<0$ to stability, and for $(2k+\sigma)-a_\sigma=0$ to a center. We point out that $(2k+\sigma)-a_\sigma=0$ is equivalent to defining $q=q^\ast(k,\sigma)$. We first note that $(2k+\sigma)-a_\sigma<0$ is equivalent to $q>q^*(k,\sigma)$ and by Lemma \ref{Uniq:sol} there exists a unique classical global solution of \eqref{Eq:v:alfa}. In case $(2k+\sigma)-a_\sigma>0$ (i.e. $q<q^*(k,\sigma)$) we cannot claim the existence of a global solution of  \eqref{Eq:v:alfa} and thus the discussion about this case is excluded. In the stability case (iii)-(iv), that is $q>q^*(k,\sigma)$, depending on the sign of the discriminant $\varDelta$ we obtain two types of orbits a spiral or a stable node. Using the same arguments given in \cite{SaVe15} ($\sigma =0$) we can stablish that for $q^*(k,\sigma) < q < q_{JL}(k,\sigma)$ we have a spiral and for $q_{JL}(k,\sigma)\leq q$ a stable node. In fact, solving the equation
\begin{equation}\label{f:Intro}
n-2k=f_{k,\sigma}(q),
\end{equation}
where
\[
f_{k,\sigma}(q)=\frac{2q(2k+\sigma)}{k(q-k)}+\frac{2(2k+\sigma)}{k}\sqrt{\frac{q}{q-k}}+\frac{(2k+\sigma)(k-1)}{q-k},
\]
in $q$, we obtain the unique solution $q_{JL}(k,\sigma)$ of \eqref{f:Intro} and replacing it in \eqref{Eq:aalfa} and \eqref{DeltamodelI}, gives $\Delta=0$. Observe that, for $k=1$ and $\sigma=0$, $f_{1,0}(q)$ coincides with the function $f$ introduced in \cite{JoLu73}. See \cite{SaVe15} for more details in case $\sigma=0$.

\begin{lemma}\label{lem:qJL:nodo}
Let $n>2k+8 + 4\sigma/k$ and $q \geq q_{JL}(k,\sigma)$. Then the unique solution $(x(t),y(t))$ of \eqref{LVS1} coincides with the graph of an increasing function $y = y(x)$
\end{lemma}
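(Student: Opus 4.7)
The plan is to exhibit the orbit $(x(t),y(t))$ as the graph of a monotone function $y(x)$ by showing that $dx/dt$ keeps a strictly negative sign along the entire orbit, whence $x(t)$ is a strictly monotone bijection from $\mathbb{R}$ onto the interval $(\hat{x},n+\sigma)$.

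First I would set up the two non-trivial nullclines in the open first quadrant,
\[
L_1:\; n+\sigma-x-qy=0,\qquad L_2:\;-\tfrac{n-2k}{k}+\tfrac{x}{k}+y=0,
\]
which cross only at $(\hat{x},\hat{y})$. Since $q>k$, the slope $-1/q$ of $L_1$ is larger than the slope $-1/k$ of $L_2$, so on the strip $x>\hat{x}$ the line $L_1$ lies strictly above $L_2$. I would then define the open region
\[
\mathcal{R}=\{(x,y):x>\hat{x},\;y>(n+\sigma-x)/q\},
\]
and observe that on $\mathcal{R}$ we have $dx/dt<0$ (above $L_1$) and, because $y>0$ and above $L_1$ forces above $L_2$ in this strip, also $dy/dt>0$.

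Next I would verify entrance into $\mathcal{R}$ and invariance. Entrance: the unstable eigenvector of the saddle $(n+\sigma,0)$ pointing into the open first quadrant has slope $-(k(n+\sigma)+2k+\sigma)/(kq(n+\sigma))$, which is strictly steeper than $-1/q$; hence the orbit leaves $(n+\sigma,0)$ into $\mathcal{R}$. Invariance through $L_1$: writing $g(t)=n+\sigma-x(t)-qy(t)$, at a would-be exit point $g(t_0)=0$, so $dx/dt(t_0)=x\,g=0$; since on $L_1$ with $x>\hat{x}$ we are strictly above $L_2$, $dy/dt(t_0)>0$, and
\[
g'(t_0)=-\tfrac{dx}{dt}-q\tfrac{dy}{dt}=-q\tfrac{dy}{dt}<0.
\]
Thus $g$ is strictly decreasing at $t_0$, which forbids $g$ from rising from the negative values taken in $\mathcal{R}$ up to $0$; the orbit cannot cross $L_1$.

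The delicate step is to rule out an exit in finite time through the vertical segment $x=\hat{x}$, and this is precisely where the hypothesis $q\geq q_{JL}(k,\sigma)$ is used. By the discussion of \eqref{DeltamodelI}, the condition $n>2k+8+4\sigma/k$ together with $q\geq q_{JL}$ gives $\varDelta\geq 0$, so the Jacobian $J$ at $(\hat{x},\hat{y})$ has two real negative eigenvalues $\lambda_-\leq\lambda_+<0$. Using the explicit form of $J$ in Section~5 one computes the eigenvector slopes as
\[
\frac{v_2}{v_1}=-\frac{\hat{x}+\lambda_\pm}{q\,\hat{x}},
\]
and a short check using $(\hat{x}+\hat{y})^2>(\hat{y}-\hat{x})^2-4\hat{x}\hat{y}(q-k)/k$ (which reduces to $q>0$) gives $\lambda_\pm+\hat{x}>0$, so both slopes lie in $(-1/q,0)$, i.e. both are flatter than $L_1$. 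Therefore the orbit, which must converge to $(\hat{x},\hat{y})$ by the asymptotic analysis of Section~5, approaches tangent to one of these eigenvectors from the lower-right half-plane $\{x>\hat{x},\,y<\hat{y}\}$ and stays above $L_1$ for all large $t$. Combined with the invariance established above, the orbit remains in $\mathcal{R}$ for every $t\in\mathbb{R}$, so $dx/dt<0$ and $dy/dt>0$ throughout, and $y$ is a (strictly) monotone function of $x$ on $(\hat{x},n+\sigma]$.

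The main obstacle is the last step: without the hypothesis $q\geq q_{JL}$, the eigenvalues at $(\hat{x},\hat{y})$ are complex and the orbit spirals, oscillating around $(\hat{x},\hat{y})$ and crossing $L_1$ infinitely often, which destroys monotonicity. The role of $q_{JL}(k,\sigma)$ is precisely to force $\varDelta\geq 0$; the additional bookkeeping needed is to show that the real eigenvectors have slopes compatible with the region $\mathcal{R}$ (slopes in $(-1/q,0)$), so that the tangential approach direction does not eject the orbit from $\mathcal{R}$.
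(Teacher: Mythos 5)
Your setup is sound as far as it goes: the entrance of the orbit into the region above the nullcline $L_1$ (the slope $\gamma_s$ at $(n+\sigma,0)$ is indeed steeper than $-1/q$), the observation that for $x>\hat{x}$ the set above $L_1$ lies above $L_2$ so that $x'<0$ and $y'>0$ there, and the non-crossing argument for $L_1$ are all correct and agree with the first steps of the paper's proof (which also uses the line $y=-x/q+(n+\sigma)/q$ as a lower barrier). The eigenvector computation at $(\hat{x},\hat{y})$ is also correct: for $q\geq q_{JL}(k,\sigma)$ both eigenvalues are real, negative, and larger than $-\hat{x}$, so both eigendirections have slope in $(-1/q,0)$.

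The gap is in the final step. Your region $\mathcal{R}$ is \emph{not} forward invariant: on its left boundary $\{x=\hat{x},\,y>\hat{y}\}$ one has $dx/dt=\hat{x}\,q(\hat{y}-y)<0$, so the vector field points out of $\mathcal{R}$ there. The assertion that the orbit ``approaches tangent to one of these eigenvectors from the lower-right half-plane $\{x>\hat{x},\,y<\hat{y}\}$'' is exactly what must be proved and does not follow from the local eigenstructure: a stable node is approached by orbits from all sides, and nothing you have shown prevents the orbit from overshooting past $y=\hat{y}$ while $x>\hat{x}$, exiting $\mathcal{R}$ through that vertical ray, and converging to $(\hat{x},\hat{y})$ from the left --- a single partial loop, which already destroys monotonicity and is not excluded by the absence of complex eigenvalues. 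Your $L_1$-invariance does not help here, since such an excursion never recrosses $L_1$. To close the region one needs an \emph{upper} barrier joining $(n+\sigma,0)$ to $(\hat{x},\hat{y})$; this is precisely the hard part of the paper's proof, which constructs the curve $y=g(x)=d(n+\sigma-x)^{\alpha}$ with $\alpha=\frac{2k+\sigma+a_\sigma}{2a_\sigma}$ and $g'(\hat{x})=(\gamma_++\gamma_-)/2$, and shows via the sign analysis of $h(x)=F_2(x,g(x))-g'(x)F_1(x,g(x))$ (and of the associated function $\rho$) that the orbit cannot cross it. Without an argument of this kind, or some other global device confining the orbit to a compact lens between the two barriers, your proof is incomplete at its decisive point.
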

\begin{proof}
The orbit starts at the critical point $(n+\sigma, 0)$ with a slope given by
\begin{equation}\label{Eq:gamma_s}
\gamma_s:=-\frac{(n-2k)q^*(k,\sigma)}{qk(n+\sigma)},
\end{equation}
by L'Hospital's rule.

Now we describe the behavior of the orbit $(x(t),y(t))$ near $(\hat{x},\hat{y})$ as time goes to infty. To this end, we compute
\[
\gamma:=\lim_{t\to +\infty}\frac{y'(t)}{x'(t)}
\]
again by L'Hospital's rule. The existence of the limit above is equivalent to solving the equation
\begin{equation}\label{gamma:def}
\gamma^2 +\frac{2k+\sigma + a_{\sigma}}{qa_{\sigma}} \gamma +\frac{2k+\sigma}{qka_{\sigma}}=0,
\end{equation}
whose roots are given by
\begin{equation*}
\gamma_{\pm} = - \frac{2k+\sigma + a_{\sigma}}{2qa_{\sigma}} \pm \frac{1}{2qa_{\sigma}}\sqrt{(2k+\sigma + a_\sigma)^2-4\frac{(2k+\sigma)q}{k}\,a_{\sigma}}.
\end{equation*}
Note that $(q-k)^2\Delta(a_{\sigma}) = (2k+\sigma + a_\sigma)^2-4\frac{(2k+\sigma)q}{k}\,a_{\sigma}$, where $\Delta(a_{\sigma})$ is as in \eqref{DeltamodelI}. Hence the roots of \eqref{gamma:def} are real numbers if, and only if, $q\geq q_{JL}(k,\sigma)$. Further, $\gamma_{\pm}$ are negative roots.

Next, consider a function $g$ defined by
\begin{equation}\label{Def:g}
g(x) = d(n+\sigma- x)^{\alpha}, \, x\in [\hat{x}, n+\sigma]
\end{equation}
where the constants $d$ and $\alpha$ are chosen such that the graph of $g$ connects the points $(n+\sigma, 0)$ and $(\hat{x},\hat{y})$. Using $\hat{y}=g(\hat{x})$, we obtain
\[
d=q^{-\alpha}\left(\frac{2k+\sigma}{q-k}\right)^{1-\alpha}.
\]
Now, setting $g'(\hat{x})=(\gamma_{+}+\gamma_{-})/2$, we have $\alpha=\frac{2k+\sigma + a_{\sigma}}{2 a_{\sigma}}$. Note that $\alpha \in (1/2, 1)$ since $q\geq q_{JL}(k,\sigma)$ and $2k+\sigma - a_{\sigma} = (n-2k)(q^*(k,\sigma) - q)<0$.

We claim that $(x(t), y(t))$ remains below the curve $y=g(x)$ when $x\in (\hat{x},n+\sigma)$. Indeed, we first note that the curve $(x(t), y(t))$ lies above the line $y=-x/q + (n+\sigma)/q$ since the slope $-1/q$ is bigger than $\gamma_s$ in \eqref{Eq:gamma_s} and remains from below $(x,g(x))$ near $(n+\sigma, 0)$ on the phase plane.

Suppose now by contradiction that $(x(t), y(t))$ intersects the curve $y=g(x)$ in a point $(x_0,y_0)$ at $t=t_0$. In this case we have two possibilities: the orbit  $(x(t), y(t))$ remains on the graph of $g$ for all $t\geq t_0$ and then $(x(t), y(t))$ arrives at $(\hat{x},\hat{y})$ with the same slope that $y=g(x)$ at $\hat{x}$, which is impossible since  $g'(\hat{x})=(\gamma_{+}+\gamma_{-})/2 \neq \gamma_{\pm}$ because the inequality $q>q_{JL}(k,\sigma)$. The other case is that the orbit crosses the graph of $g$ at point $x_0$. In this case we have
\begin{equation}\label{g:prima}
g'(x_0)> \frac{y'(t_0)}{x'(t_0)}.
\end{equation}
We now show that \eqref{g:prima} is impossible. To this end, define the functions
\begin{equation}\label{F_1:F_2}
\begin{cases}
F_1(x,y)= x(n+\sigma - x - qy),\\
\,\\
F_2(x,y) = y(-\frac{n-2k}{k} +\frac{1}{k}x + y),\\
\end{cases}
\end{equation}
for all $(x,y)\in [\hat{x},n+\sigma]\times [0,\hat{y}]$. Note that
\begin{equation}\label{F:at:y0}
F_2(x_0,g(x_0)) - g'(x_0)F_1(x_0,g(x_0)) = F_2(x_0,y_0) - g'(x_0)F_1(x_0,y_0) <0.
\end{equation}
Next, set
\begin{equation}\label{h:def:F}
h(x) = F_2(x,g(x)) - g'(x)F_1(x,g(x)), \, x\in (\hat{x},n+\sigma).
\end{equation}
Then, by \eqref{Def:g} and \eqref{F_1:F_2} we have
\[
h(x) = c_1(n+\sigma-x)^{\alpha} + c_2\,x(n+\sigma - x)^{\alpha} + c_3(n+\sigma -x)^{2\alpha}+c_4\,x(n+\sigma -x)^{2\alpha -1},
\]
where\; $c_1:=-\frac{n-2k}{k}\,d,\, c_2:=\frac{k\alpha+1}{k}\,d,\, c_3:=d^2$\; and\, $c_4:= -\alpha q d^2$.
\medbreak

Using \eqref{F_1:F_2}, we obtain
\begin{equation}{\label{h:prima:y2}}
h'(\hat{x}) = \frac{qa_{\sigma}}{q-k}\left(\beta^2 + \frac{2k+\sigma + a_\sigma}{qa_{\sigma}}\beta + \frac{2k + \sigma}{kqa_{\sigma}}\right),
\end{equation}
where $\beta=g'(\hat{x})$. Further, since $\beta=(\gamma_+ + \gamma_-)/2$ we have $h'(\hat{x})\leq 0$ by \eqref{gamma:def}, with the strict inequality if $q > q_{JL}(k,\sigma)$ and the equality if $q = q_{JL}(k,\sigma)$ holds. Now note that $h(n+\sigma)=0$ since $\alpha\in(1/2,1)$ and $h(\hat{x})=0$ by \eqref{h:def:F} and the equality $\hat{y}=g(\hat{x})$. The derivative of $h$ may be written as
\begin{equation}\label{hprima}
h'(x)= d(n+\sigma - x)^{\alpha-1}\rho(x)
\end{equation}
being the function $\rho$ given by
\[
\rho(x)=  c_5(n+\sigma -x) - \left(2 + \alpha x + \frac{\sigma}{k}\right)\alpha +c_6(n+\sigma -x)^{\alpha} +c_7\,x(n+\sigma -x)^{\alpha -1},
\]
where\; $c_5:=\alpha+(\alpha+1)/k,\, c_6:=-\alpha (q+2)d$\; and\; $c_7:=\alpha(2\alpha-1)qd$.
\medbreak

Now, in order to determine the growth of $h$ near $n+\sigma$, we rewrite $h'$ as
\[
h'(x)=d(n+\sigma - x)^{1-\alpha}\rho(x)/(n+\sigma - x)^{2(1-\alpha)}
\]
and conclude that
\begin{equation}\label{h:prima:0}
\lim_{x\,\uparrow\, n+\sigma} h'(x) = \infty,
\end{equation}
since $\alpha\in(1/2,1)$. Note that $\rho(\hat{x})\leq 0$ by \eqref{h:prima:y2}-\eqref{hprima}. In particular, $\rho(\hat{x})=0$ in the case $q=q_{JL}(k,\sigma)$ by \eqref{h:def:F} and the equalities $g'(\hat{x})=\gamma_+=\gamma_- = \gamma$. On the other hand,  it is easy to see that
\begin{equation}\label{rho:0}
\lim_{x\,\uparrow\, n+\sigma}  \rho(x) = \infty.
\end{equation}
Computing $\rho'(x)$ we conclude that the equation $\rho'(x)=0$ admits only one solution which is not a minimum by \eqref{rho:0}. Further, the equation $\rho(x)=0$ has at most two solutions on $[\hat{x},n+\sigma)$ since, if there is no solution, then $\rho>0$ on $[\hat{x},n+\sigma)$ by \eqref{rho:0} and hence by \eqref{hprima} we deduce that $h$ is increasing on $[\hat{x},n+\sigma)$ contradicting  $h(\hat{x})=h(n+\sigma)=0$. Now, if we have two solutions on $(\hat{x},n+\sigma)$ then $\rho(\hat{x})>0$, which is a contradiction since $\rho(\hat{x})\leq 0$. Hence the equation $\rho(x)=0$ admits at most two solutions on $[\hat{x},n+\sigma)$. In this case we see that $h$ admits at most two critical points on $[\hat{x},n+\sigma)$. But this is impossible since $h(x_0)<0$ by \eqref{F:at:y0}-\eqref{h:def:F} and \eqref{h:prima:y2} and \eqref{h:prima:0}. The proof is now complete since \eqref{g:prima} is impossible.
\end{proof}

\section{Proof of Theorem \ref{Main:1:Thm1}}
The first statement concerning the existence and nonexistence of solutions of $(P_\lambda)$ follows from Lemma \ref{Uniq:sol} and Lemma \ref{max:sol}.

We show the uniqueness and the multiplicity of solutions of $(P_{\lambda})$ as follows: let $v$ be the unique solution of \eqref{Eq:v:alfa}. Let $t_0 \in \RR$ be fixed. Set $s_0 = e^{t_0}$ and choose a negative constant $A$ depending on $s_0$ such that
\begin{equation}\label{As:lambdaS0}
v(s_0)=-(1-A)^{-1}
\end{equation}
holds.

Now we recall the rescaling
\[
s=\left(\frac{\lambda}{\tilde{\lambda}(k,\sigma)}\right)^{\frac{1}{2k+\sigma}}\left(1-A \right)^{\frac{q-k}{2k+\sigma}}r,\, r>0,
\]
used to obtain \eqref{Eq:v:alfa}. Set $r=1$ and choose a positive constant $\lambda$ depending on $s_0$ such that
\begin{equation}\label{lambdaS0}
s_0=\left(\frac{\lambda}{\tilde{\lambda}(k,\sigma)}\right)^{\frac{1}{2k+\sigma}}\left(1-A \right)^{\frac{q-k}{2k+\sigma}}.
\end{equation}

Next, we write the value $v(s_0)$ in terms of the point $(x(t_0), y(t_0))$ with $s_0=e^{t_0}$ (see \eqref{v:s:t}) as follows
\begin{equation}\label{v:s0:t0}
v(s_0)=-\left[s_{0}^{2k+\sigma}\bar{\lambda}(k,\sigma)\right]^{-\frac{1}{q-k}}(x(t_0)y(t_0)^k)^{\frac{1}{q-k}}.
\end{equation}

Note that $\lambda = c_{n,k}x(t_0)y(t_0)^k$ by \eqref{As:lambdaS0}-\eqref{v:s0:t0}. Further, the map $s_0 \mapsto \lambda$ has range $(0,\tilde{\lambda})$ by the continuity of the orbit, $\lim_{t\to -\infty}y(t) =0$ and $\lim_{t\to +\infty}c_{n,k}x(t)y(t)^k =\tilde{\lambda}$. Now we define
\[
u_{\lambda}(r) = 1 + (1-A)v(s).
\]
Note that $u_{\lambda}(0)=A$ and $u_{\lambda}$ solves problem $(P_\lambda)$. Now we discuss the multiplicity and the uniqueness of solutions. We consider the line $y=\lambda \tau_{\sigma} / \tilde{\lambda}$. Then for each intersection between the line and the orbit, we obtain one and/or several times $t_0$ and then one and/or several $s_0$ as well. Further, if we assume that there exist two different points $t_1, t_2 \in \RR$ such that at line $y(s_1) = y(s_2)$ holds, then following the same argument as before we find two different solutions of problem $(P_\lambda)$ for the same $\lambda$ by \eqref{As:lambdaS0}.

{\bf Proof of (I)}. In this case the orbit describes a spiral starting from $(n+\sigma, 0)$ and ending at $(\hat{x}, \hat{y})$. Thus the line $y=\lambda \tau_{\sigma} / \tilde{\lambda}$ may intersect one or several times to the orbit. On the other hand, the line  $y=\lambda \tau_{\sigma} / \tilde{\lambda}$ intersects the curve $(x(t), y(t))$ an infinite numbers of times if $\lambda=\tilde{\lambda}$ and finite but large numbers of times if $\lambda$ is sufficiently close to but not equal to $\tilde{\lambda}$. Hence, according to the explication above we conclude the proof.

\medbreak

{\bf Proof of (II)}. Here we apply Lemma \ref{lem:qJL:nodo}. In the case $q\geq q_{JL}(k)$, we see that the line $y=\lambda \tau_{\sigma} /\tilde{\lambda}$ intersects the orbit $(x(t),y(t))$ only one time for each time $t_0$, which in turn implies that the problem $(P_{\lambda(s_0)})$ admits a unique solution $u_{\lambda(s_0)}$.  Now, if $\tilde{\lambda} <\lambda^*$ we see that the solution $u_{\lambda(s_0)}$ is a maximal solution by Lemma \ref{max:sol}, which is decreasing as $\lambda(s_0)$ increases. Since $\lambda(s_0) \to \tilde{\lambda}$ as $s_0\to +\infty$, we conclude that $u_{\lambda(s_0)}(0) \to A(\infty) >-\infty$ as $s_0\to +\infty$ since $\tilde{\lambda} <\lambda^*$, which is impossible by \eqref{As:lambdaS0}. Therefore, $\tilde{\lambda} =\lambda^*$. This completes the proof of Theorem \ref{Main:1:Thm1}.


\end{document}